\documentclass[12pt,draft,reqno]{amsart}

\usepackage{amssymb} 
\usepackage{dsfont} 
\usepackage{mathrsfs} 
\usepackage{stmaryrd} 
\usepackage{color}
\usepackage{xcolor}

\usepackage{soul}

\DeclareMathAlphabet{\mathpzc}{OT1}{pzc}{m}{it} 

\newtheorem{Thm}{Theorem}[section]

\newtheorem{Lem}{Lemma}[section]
\newtheorem{Prop}{Proposition}[section]

\newtheorem{Def}{Definition}[section]

\theoremstyle{definition} 
\newtheorem{Ass}{Assumptions}[section]

\theoremstyle{definition} 

\theoremstyle{definition}

\theoremstyle{definition}

\makeatletter
\@addtoreset{equation}{section}
\makeatother

\def\fdual#1#2{{_{V'}\langle #1,#2\rangle_V}}

\def\distrdual#1#2{{_{\mathscr{D}'(\Omega)}\langle #1,#2\rangle_{\mathscr{D}(\Omega)}}}

\def\thetadiff{{\sl\Theta}}
\def\chidiff{{\mathcal X}}
\def\thetadiffhat{\widehat{{\sl\Theta}}}

\newcommand{\thetabarep}{\overline{\theta}_\eps}

\newcommand\vuoto{\varnothing} 
\newcommand\parti[1]{\mathscr{P}(#1)} 

\newcommand{\eps}{\varepsilon} 

\DeclareMathOperator{\de}{d \! \hspace{0.04ex}} 
\newcommand\intom{\int_\Omega} 
\newcommand\integr{\int_0^t} 
\newcommand\normaq[2]{\Vert #1\Vert_{#2}^2} 
\newcommand\thetaep{\theta_{\eps}}
\newcommand\chiep{\chi_{\eps}}

\newcommand\xiep{\xi_{\eps}}
\newcommand\thetahatep{\widehat{\theta}_{\eps}}
\newcommand\fhat{\widehat{f}}
\newcommand\fep{f_\eps}

\newcommand\thetazep{\theta_{0\eps}}
\newcommand\chizep{\chi_{0\eps}}

\newcommand\xizep{\xi_{0\eps}}

\newcommand\norma[2]{\Vert #1\Vert_{#2}} 

\def\fdual#1#2{{_{V'}\langle #1,#2\rangle_V}}

\newcommand{\convergedebstar}{\stackrel{*}{\rightharpoonup}}

\newcommand{\Thetatilde}{\widetilde{\Theta}}

\newcommand\thetahat{\widehat{\theta}}

\renewcommand\L[1]{L^#1(\Omega)} 
\renewcommand\H[1]{H^#1(\Omega)} 

\newcommand\sign{\mathrm{sign}} 

\newcommand\erre{\mathbb{R}} 

\renewcommand\div{\mathrm {div\hspace{0.5ex}}} 

\newcommand\vett[1]{\mathbf{#1}} 

\newcommand{\thetatilde}{\widetilde{\theta}}
\newcommand{\chitilde}{\widetilde{\chi}}

\newcommand\function{\longrightarrow} 

\newcommand\en{\mathbb{N}} 

\providecommand{\clint}[1]{\hspace{0.045ex}\left[#1\right]} 

\newcommand\norm[2]{\Vert #1\Vert_{#2}} 

\providecommand{\clsxint}[1]{\hspace{0.1ex}\left[#1\right[\hspace{0.15ex}} 
\providecommand{\cldxint}[1]{\hspace{0.15ex}\left]#1\right]} 
\providecommand{\opint}[1]{\hspace{0.15ex}\left]#1\right[\hspace{0.15ex}} 

\newcommand\setmeno{\!\smallsetminus\!} 

\renewcommand{\L}{{\textsl{L}\hspace{0.17ex}}} 

\newcommand{\convergedeb}{\rightharpoonup} 

\definecolor{blu}{rgb}{0.1,0.1,1}

\definecolor{green}{rgb}{0.0, 0.5, 0.0}

\definecolor{marr}{rgb}{0.63, 0.47, 0.35}

\definecolor{magenta(dye)}{rgb}{0.79, 0.08, 0.48} %

\definecolor{violet}{rgb}{0.54,0.17,0.89}

\begin{document}


\title[Stefan problem]{Asymptotic analysis for the Stefan problem with a doubly nonlinear phase relaxation}

\author{Vincenzo Recupero, Chiara Zanini}
\thanks{The first author is a member of GNAMPA-INdAM}
        
\address{\textbf{Vincenzo Recupero} \\
        Dipartimento di Scienze Matematiche \\ 
        Politecnico di Torino \\
        C.so Duca degli Abruzzi 24 \\ 
        I-10129 Torino \\ 
        Italy. \newline
        {\rm E-mail address:}
        {\tt vincenzo.recupero@polito.it}}
\address{\textbf{Chiara Zanini} \\
        Dipartimento di Scienze Matematiche \\ 
        Politecnico di Torino \\
        C.so Duca degli Abruzzi 24 \\ 
        I-10129 Torino \\ 
        Italy. \newline
        {\rm E-mail address:}
        {\tt chiara.zanini@polito.it}}

\subjclass[2020]{35R35, 35K61, 80A22}
\keywords{Stefan problem, phase relaxation, nonlinear PDEs, maximal monotone nonlinearities, $L^1$-techniques}



\begin{abstract}
In this paper we consider the Stefan problem endowed with a doubly nonlinear phase relaxation. More precisely we assume that the rate of convergence of the phase function depends on the temperature via an increasing continuous function, and it depends on the phase via a maximal antimonotone graph. We prove the existence and uniqueness of the solution of the relaxed problem, and we perform the asymptotic analysis toward the Stefan problem as the relaxation parameter approaches zero. 
\end{abstract}


\maketitle


\thispagestyle{empty}


\section{Introduction}

The \emph{Stefan problem} is a well-known system of partial differential equations formulated in order to model phase-transition phenomena in a substance attaining two phases (e.g. solid and liquid). It is assumed that the substance is contained in a space domain $\Omega$ during the time interval $\clint{0,T}$, and the energy balance equation reads
\begin{equation}\label{enbal-intro}
  \frac{\partial}{\partial t}(\theta+\chi) - \Delta\theta = g \qquad 
  \text{in $Q := \Omega \times \clint{0,T}$},
\end{equation}
where for simplicity we have normalized to $1$ all the physical constants. The unknowns 
$\theta = \theta(x,t)$ and $\chi = \chi(x,t)$, $(x,t) \in Q$, represent respectively the temperature and the phase function, i.e. the proportion of phases: for instance, if the substance is solid at $x$ at the time $t$, then it
turns out to be $\chi(x,t) = -1$, while $\chi(x,t) = 1$  accounts for a pure liquid state. Intermediate states are represented by the condition $-1 < \chi < 1 $, so that it is allowed the existence of mushy regions where the substance is a mixture of the solid and liquid parts (cf., e.g., \cite[p. 99]{Vis96}). Therefore in order to describe the evolution of the system, the following condition relating $\theta$ and $\chi$ is introduced:
\begin{equation}\label{Stef cond}
  \chi \in \sign(\theta) \qquad \text{in $Q$},
\end{equation}
where $\sign$ denotes the multivalued sign graph (i.e. $\sign(r) := -1$ if $r<0,$ $\sign(r) := [-1,1]$ if $r=0,$ 
$\sign(r):=1$ if $r>0$).
Formula \eqref{Stef cond} is called \textsl{equilibrium condition of Stefan type}, so that $\theta=0$ is the equilibrium temperature at which the two phases can coexist. 
Therefore problem \eqref{enbal-intro}-\eqref{Stef cond} is usually called \textsl{Stefan problem}. 

Of course the Stefan condition \eqref{Stef cond} could also be written in the equivalent form 
\begin{equation}\label{Stef cond b}
  \sign^{-1}(\chi) \ni \theta \qquad \text{in $Q$},
\end{equation}
$\sign^{-1}$ being the inverse relation of the multivalued $\sign$ graph ($\sign^{-1}(r) := 0$ if 
$r \in \opint{-1,1}$, $\sign^{-1}(-1) := \cldxint{-\infty,0}$, $\sign^{-1}(1) := \clsxint{0,\infty}$).

If dynamic supercooling or superheating effects are to be taken into account, then condition 
\eqref{Stef cond b} has to be replaced by a dynamic relation, and usually the following \textsl{relaxation dynamics} for the phase variable 
$\chi$ is used (cf., e.g., \cite{Vis85,Vis96} and their references):
\begin{equation}\label{Vis rel}
  \eps\frac{\partial \chi}{\partial t} + \sign^{-1}(\chi)\ni\theta \qquad \text{in $Q$},
\end{equation}
$\eps$ being a small kinetic positive parameter. It is clear that from a mathematical point of view, the relaxation dynamics can also be modeled by the inclusion
\begin{equation}\label{new rel}
  \eps\frac{\partial \chi}{\partial t} + \chi\in\sign(\theta) \qquad \text{in $Q$},
\end{equation}
but this relation is not equivalent to \eqref{Vis rel}.

The Stefan problem \eqref{enbal-intro}-\eqref{Stef cond} and the Stefan problems with phase relaxation
\eqref{enbal-intro}, \eqref{Vis rel} or \eqref{enbal-intro}, \eqref{new rel} have been extensively studied in the literature.
Concerning \eqref{enbal-intro}--\eqref{Stef cond}, we refer, e.g., to \cite{Dam77,DamKenSat94,Vis96}, instead one can see
\cite{Dam77,Vis85, ColGra93,DamKenSat94,Vis96} for the relaxed problem \eqref{enbal-intro}, \eqref{Vis rel}. In particular in \cite{Vis85}, uniqueness and existence of \eqref{enbal-intro}, \eqref{Vis rel}, coupled with suitable initial-boundary conditions, are proved in the framework 
of Sobolev spaces, and the solution of the relaxed problem is shown to converge to the solution of the problem \eqref{enbal-intro}--\eqref{Stef cond} as $\eps\searrow0$, in a suitable Sobolev topology.
The alternative has been studied in 
\cite{Vis01, Rec02a}, in particular in
\cite{Rec02a} existence, uniqueness, and asymptotic analysis toward
the Stefan problem are proved within the Sobolev setting. Let us also observe that the Stefan problem with phase relaxation can also be studied taking into account a hyperbolic energy balance yielding a finite speed of propagation for the temperature field (see, e.g., \cite{Vis85,ShoWal87,Sh0Wal91,ColRec02,Rec02b,Rec04}).

The natural analytic models \eqref{enbal-intro}, \eqref{Vis rel} and \eqref{enbal-intro}, \eqref{new rel} have some modelling drawbacks. Indeed, as observed in \cite{Vis01}, in \eqref{Vis rel} the rate of the phase $\chi$ does not depend on $\chi$, because the term $\sign^{-1}(\chi)$ only represents a constraint for the phase function, and in \eqref{new rel} the phase depends only on the sign of the temperature $\theta$. It would be more realistic instead that the rate of $\chi$ decays as $\chi$ approaches $1$, and that it also decays as $\theta$ tends to $0$.

In order to overcome this modelling issue it would be convenient to introduce some more general dependencies on $\theta$ and $\chi$ in the phase dynamics. A first proposal in this direction can be found in 
\cite{Vis01}, where it is introduced the phase relaxation 
\begin{equation}\label{phi-rel-mod}
  \eps\frac{\partial \chi}{\partial t} = \psi(\theta,\chi) \qquad \textrm{in }Q
\end{equation} 
for a suitable class of Lipschitz continuous functions $\psi:\erre^2\rightarrow\erre$ which are increasing in 
$\theta$, decreasing in $\chi$, and such that $\psi(\theta,\chi) = 0$ if and only if 
$\chi \in \sign(\theta)$. 

Model \eqref{phi-rel-mod} has some very good features, but 
it requires a very strong regularity of the dependence on the temperature and the phase, namely a Lipschitz continuity in the pair $(\theta,\chi)$: neither the monotone case, nor the multivalued one are allowed.

Therefore we propose a model which takes a step further in the direction of \eqref{phi-rel-mod}. 
To be more precise we consider the model of phase relaxation
\begin{alignat}{3}
 & \eps\frac{\partial\chi}{\partial t} + \beta(\chi)\ni  \gamma(\theta) & \qquad & 
     \text{in $ \Omega\times\opint{0,T}\, $}, \label{newphrel-intro}
\end{alignat}
where $\gamma$ and $\beta$ are increasing continuous and $\beta$ is possibly multivalued function. Of course \eqref{newphrel-intro} is not a proper generalization of \eqref{phi-rel-mod}, but it takes into account much less regular phase relaxation dynamics, and it aims to solve the above mentioned  modelling drawbacks.
Let us note that model \eqref{newphrel-intro} has been considered in the recent paper \cite{Rou23} in the particular case $\beta = \sign^{-1}$, whose only purpose is to provide a constraint for the phase function.

The aim of our present paper is to prove the existence and uniqueness of solutions of \eqref{enbal-intro}, \eqref{newphrel-intro}, and to perform the asymptotic analysis as $\eps$ approaches zero of the model of phase relaxation \eqref{enbal-intro}, \eqref{newphrel-intro}, in order to understand if the solutions of 
\eqref{enbal-intro}, \eqref{newphrel-intro} converge to the solution of the Stefan problem
\begin{alignat}{3}
  & \frac{\partial}{\partial t}(\theta+\chi) - \Delta\theta = g & \qquad &  \text{in $Q$}, \label{Stefan-intro} \\
  & \chi \in \alpha(\theta) & \qquad & \text{in $Q$}, \label{alfacond-intro}
\end{alignat}
where $\alpha$ is an increasing multivalued function which generalizes the $\sign$ function. Of course $\alpha$ cannot be completely arbitrary, but it requires some compatibility conditions with the phase dynamics
\eqref{newphrel-intro}, thus in order to perform our asymptotic analysis we will assume that 
\[
\gamma(\theta) \in \beta(\chi) \quad \Longleftrightarrow \quad \chi \in \alpha(\theta).
\]
The generality of \eqref{newphrel-intro} makes the use of $L^2$-techniques not particularly useful in order to get enough a priori estimates for the asymptotic analysis as $\eps \searrow 0$. Therefore  we need to exploit $L^1$-techniques, and our
analysis is strongly inspired on some $L^1$-estimates performed by Visintin in \cite{Vis01} (see also \cite{MagVerVis89, Vis02}), but we have to adapt those arguments to our nonregular multivalued case. Moreover, in \cite{Vis01} the convergence of the relaxed problem \eqref{enbal-intro}, \eqref{phi-rel-mod} to a suitable weak formulation of the Stefan problem is proved along a subsequence of $\eps$ only, because it is not known if this rather weak form of the Stefan problem has a unique solution.  Instead in our analysis, by assuming the natural physical assumption that $\alpha$ is bounded, we are able to prove the uniform boundedness of the phase function, and this fact allows us to get some $L^2$-estimates which make the solution of \eqref{enbal-intro}, \eqref{newphrel-intro} converge to the  stronger standard formulation of the Stefan problem, having a unique solution. As a result we are able to prove the convergence along the entire index ~$\eps$.

The phase relaxation \eqref{newphrel-intro} with $\gamma$ multivalued would allow to treat with a single model the two phase relaxations \eqref{Vis rel} and \eqref{new rel}, or even more general relaxation dynamics, but this analysis is considerably more difficult due the lack of uniqueness of the relaxed solution. We will investigate this problem in the future paper \cite{RecZan27}.

We supply the systems \eqref{enbal-intro}, \eqref{Stef cond} and \eqref{enbal-intro}, \eqref{newphrel-intro}
with the rather general initial\--\-boun\-dary conditions described as follows: letting $\{\Gamma_0,\Gamma_1\}$ be a partition of the boun\-dary of $\Omega$ into two measurable sets, we take
\begin{alignat}{3}
  & \theta = \theta_D & \qquad & \text{on $\Gamma_0 \times \clint{0,T}$}, \label{b+i cond1-intro} \\
  & \partial_{\textbf{n}}\theta = -\theta_N & \qquad & \text{on $\Gamma_1 \times \clint{0,T}$}, 
      \label{b+i cond2-intro} \\
  & \theta(\cdot,0) + \chi(\cdot,0)  = \theta_0 + \chi_0 & \qquad &  \text{in $\Omega$}, 
      \label{b+i cond3-intro} 
\end{alignat}
where $\theta_D$, $\theta_N$, $\theta_0$, $\chi_0$ are given functions and $\textbf{n}$ is the outward unit vector normal to the boundary of $\Omega$. We assume that $\theta_D$ is a sufficiently smooth function defined on the cylinder $Q$, that $\theta_N : \Gamma_1 \times \clint{0,T} \longrightarrow \erre$ is regular enough, and that there is a  function $u : Q \longrightarrow \erre$ such that $u = \Delta u$ in $Q$, 
$u = \theta_D$ on $\Gamma_0 \times \clint{0,T}$, and  $-\partial_{\textbf{n}} u = \theta_N$ on 
$\Gamma_1 \times \clint{0,T}$ and we set $\tilde{\theta}_0 := \theta_0 - u(\cdot,0)$. Hence we rewrite all the equations in the new unknown $\tilde{\theta} := \theta - u$ so that problem
\eqref{enbal-intro}, \eqref{newphrel-intro}, \eqref{b+i cond1-intro}-\eqref{b+i cond3-intro} reads, writing again 
$\theta$ instead of $\tilde{\theta}$ for simplicity,
\begin{alignat}{3}
  & \frac{\partial}{\partial t}(\theta+\chi) - \Delta\theta = 
     g - \frac{\partial u}{\partial t} + \Delta u &\qquad& \text{in $Q$}, \label{our system1-intro} \\
  & \eps\frac{\partial \chi}{\partial t} +\beta(\chi) \ni \gamma(\theta+u) & \qquad& \text{in $Q$},
      \label{our system2-intro} \\
  & \theta = 0 & \qquad & \text{on $\Gamma_0 \times \clint{0,T}$,} \label{our system3a-intro} \\ 
 &  \partial_{\textbf{n}}\theta=0 & \qquad & \text{on $\Gamma_1 \times \clint{0,T}$},
      \label{our system3b-intro}\\
 & \theta(\cdot,0) + \chi(\cdot,0)  = \theta_0 + \chi_0 & \qquad &  \text{in $\Omega$}.
  \label{our system4-intro}
\end{alignat}
This formulation has the advantage that the boundary conditions for $\theta$ are homogeneus and the wider generality is incorporated in the $u$-terms in the right-hand side of the balance equation and in the non-linearity $\gamma$. 

The outline of the paper is the following. In Section \ref{PreNot} we make precise the assumptions on the data and we state our main results. In Section \ref{S:Peps} we analyze the relaxed problem \eqref{enbal-intro}, \eqref{newphrel-intro}. In Section \ref{S:priorest} we perform all the necessary a priori estimates independent of $\eps$, and in the final Section \ref{S:limit} we perform the asymptotic analysis as the relaxation parameter $\eps$ goes to zero.

 
\section{Main results}

\subsection{Preliminaries and notations}\label{PreNot}
If $d \in \en$, the set of integers greater than or equal to $1$, then the $d$-dimensional Lebesgue measure of an open set $D \subseteq \erre^d$ will be denoted by $|D|$. In the following the locutions ``almost every" and ``almost everywhere" (``a.e.") will always refer to the Lebesgue measure.
Given $p \in \clsxint{1,\infty}$ and a real Banach space $B$, then $\L^p(D; B)$ will denote the space of $p$-integrable $B$-valued functions on $D$; the vector space of essentially bounded $B$-valued functions on $D$ is denoted by $\L^\infty(D; B)$. These spaces will be endowed with their natural norms defined by $\norma{v}{L^p(D;B)} := \left(\int_D\norma{v(x)}{B}^p \de x\right)^{1/p}$ if $p \in \clsxint{1,\infty}$, and by 
$\norma{v}{L^\infty(D;B)} := 
\inf_{w}\sup_{x \in D}\norma{w(x)}{B}$, where the infimum is taken over all bounded Lebesgue-measurable functions $w$ equal to $v$ a.e. in $D$.
If $p=2$ and $B = E$ is a Hilbert space then this norm is induced by the inner product $(v_1,v_2)_{L^2(D;E)} = \int_D (v_1(x), v_2(x))_E \de x$, where 
$(\cdot,\cdot)_E$ is the inner product in $E$. For the theory of integration of vector valued functions we refer, e.g., to \cite[Chapter VI]{Lan93}. We set $L^p(D) := L^p(D;\erre)$ for $p \in \clint{1,\infty}$.

We will use the Sobolev space
$H^1(D)$ $:=$ $\{v \in L^2(D)\ :\ \partial_i v \in L^2(D),\ i = 1,\ldots, d\}$, where $\partial_i v$ denotes the partial derivative of $v$ with respect to the $i$-th variable in the sense of distributions (cf., e.g., \cite{Ada75}). The symbols $\nabla$, $\div$, and $\Delta$ denote respectively the distributional gradient, divergence, and laplacian operators. $H^1(D)$ is a real Hilbert space if it is endowed with the inner product
$(v_1,v_2)_{H^1(D)} := (v_1,v_2)_{L^2(D)} + (\nabla v_1, \nabla v_2)_{L^2(D;\erre^d)}$, 
$ v_1, v_2 \in H^{1}(D)$,
which induces the usual norm $\norm{\cdot}{H^1(D)}$. If $\partial D$ is of Lipschitz class, if $\Gamma_0$ is open in $\partial D$, then the restriction operator 
$C^\infty(\overline{D}) \function C(\Gamma_0) : v \longmapsto v|_{\Gamma_0}$ can be uniquely continuously extended to the linear continuous operator $\gamma_{\Gamma_0} : H^1(D) \function L^2(\Gamma_0)$ 
where $\Gamma_0$ is endowed with the $(n-1)$-dimensional surface (Hausdorff) measure (see, e.g., \cite{LioMag72, Gri85}). The notation $v|_{\Gamma_0} := \gamma_{\Gamma_0}(v)$ is commonly used for  functions $v \in H^1(D)$. 

If $a, b \in \erre$, $a < b$, we set $L^p(a,b;B) := L^p(\opint{a,b};B)$ for $p \in \clint{1,\infty}$ we recall that $W^{1,p}(a,b;B) := \{f \in L^p(a,b;B)\ :\ f' \in L^p(a,b;E)\}$, where $g'$ denotes
the distributional derivative of a function $g : \opint{a,b} \function B$ and, if $B = E$ is a Hilbert space, we define $H^1(a,b;E) := W^{1,2}(a,b;E)$. For the main properties of the Sobolev space $W^{1,p}(\clint{a,b};B)$ and the space of functions with bounded variation $BV(a,b;B)$ we refer, e.g., to \cite[Appendix]{Bre73}. 
It is also convenient to recall the precise definition of maximal monotonicity (cf. \cite{Bre73}).

\begin{Def}
If $\parti{\erre}$ denotes the power set of $\erre$, then a ``multivalued function'' 
$\zeta : \erre \rightarrow \parti{\erre}$ is said to be \emph{maximal monotone} if, setting
$D(\zeta) := \{r \in \erre\ :\ \zeta(r) \neq \vuoto\}$, the following two conditions hold:
\begin{align}
   & (\zeta_{r_1} - \zeta_{r_2})(r_1 - r_2) \ge 0 \qquad 
      \forall r_1, r_2 \in D(\zeta),\ \forall \zeta_{r_1} \in \zeta(r_1),\ \forall \zeta_{r_2} \in \zeta(r_2); \\
   & (\sigma - \zeta_r)(\rho - r) \ge 0, \quad r \in D(\zeta), \quad \zeta_r \in \zeta(r) \quad 
      \Longrightarrow \quad \sigma \in \zeta(\rho).
\end{align}
For every $r \in D(\zeta)$ we denote by $\zeta^o(r)$ the number in $\zeta(r)$ with minimal absolute value, i.e. the unique number such that
\begin{equation}\label{zeta^o}
  |\zeta^o(r)| = \min\{|s|\ :\  s \in \zeta(r)\}.
\end{equation}
The mapping $\zeta^{-1} : \erre \function \parti{\erre}$ is defined by the relation 
\[
r \in \zeta^{-1}(s) \Longleftrightarrow s \in \zeta(r)
\]
and it is maximal monotone if $\zeta$ is maximal monotone.
\end{Def}
For a maximal monotone $\zeta : \erre \rightarrow \parti{\erre}$ we will adopt a slight abuse of notation by writing 
$\zeta(v)$ instead of $\zeta \circ v$ for any $v : \Omega\times[0,T] \function \erre $. Thus, for instance, the formula $w \in \zeta(v)$ a.e. in $Q$ means $w(x,t) \in \zeta(v(x,t))$ for a.e. $(x,t)\in\Omega\times[0,T]$.

\subsection{Main results}
Now we can present our set of assumptions.

\begin{Ass}\label{H} 
The following conditions will be used in the paper.
\begin{itemize}
\item[(A1)] 
  $\Omega \subseteq \erre^n$ is a bounded open connected set with  
  Lipschitz boundary $\Gamma := \partial \Omega$. $\Gamma_0$ and $\Gamma_1$ are open subsets of 
  $\Gamma$ such that 
  $\Gamma_0 \cap \Gamma_1 = \vuoto$. If $\overline\Gamma_0$ and $\overline\Gamma_1$ denote the       
  closures of $\Gamma_0$ and $\Gamma_1$ in $\Gamma$, then we assume that 
  $\overline\Gamma_0 \cup \overline\Gamma_1 = \Gamma$ and that
  $\overline\Gamma_0 \cap \overline\Gamma_1$ is of Lipschitz class. We define
  \begin{align}
& H := L^2(\Omega), \\
& V := H^1_{\Gamma_0}(\Omega) :=  \{v \in \H1\ :\ v|_{\Gamma_0} = 0\},
\end{align}
where $v \longmapsto v|_{\Gamma_0}$ denotes the trace operator on $\Gamma_0$ defined on $\H1$.
The space $V$ is endowed with the inner product induced by $\H1$. The duality between $V$ and its topological dual space $V'$ is denoted by $\fdual{\cdot}{\cdot}$, and if 
we identify $H$ with its dual space, then we have $V \subset H \subset V'$ with dense and compact embeddings,
and
$\fdual{e}{v} = (e,v)_H$ for every $e \in H$,  $v \in V$.

We define the linear continuous operator $A : V \function V'$ by
\begin{equation}\label{operator A}
  \fdual {A v_1}{v_2} := \intom \nabla v_1 \cdot \nabla v_2, \qquad v_1,v_2 \in V.
\end{equation}
The final time of the evolution will be denoted by $T > 0$ and we set $Q := \Omega \times \opint{0,T}$.
\item[(A2)]
 $\beta : \erre \rightarrow \parti{\erre}$ is a maximal monotone mapping such that $D(\beta) \neq \varnothing$.
\item[(A3)]
 $\gamma : \erre \rightarrow \erre$ is continuous, increasing (in the sense that $(\gamma(r_1) - \gamma(r_2))(r_1 - r_2) \ge 0$ for every $r_1, r_2 \in \erre$), and sublinear,
 i.e., there exists $C_\gamma > 0$ such that 
 \begin{equation}\label{gamma sublin}
   |\gamma(r)| \le C_\gamma(1 + |r|) \qquad  \forall r \in \erre.
 \end{equation}
 \item[(A4)] 
  $\alpha : \erre \rightarrow \parti{\erre}$ is maximal monotone, and it is ``bounded'', i.e. there is a constant $M > 0$ such that 
  \begin{equation}
  |s| \le M \qquad  \forall r \in D(\alpha), \quad \forall s \in \alpha(r).
  \end{equation}
\item[(A5)] We assume the following ``compatibility'' conditions between $\alpha$, $\beta$ and $\gamma$:
\begin{align}
   & \alpha(r) \subseteq D(\beta) \quad \forall r \in D(\alpha), \label{Hyp gamma-beta-alfa0} \\
   & \gamma(r) \in \beta(s) \ \Longleftrightarrow \ 
  s \in \alpha(r).\label{Hyp gamma-beta-alfa}
\end{align}

\end{itemize}
\end{Ass}

We will prove the following existence and uniqueness result for a doubly nonlinear relaxed Stefan problem.

\begin{Thm}\label{ex-Peps}
Let \emph{(A1)-(A3)} of Assumptions \ref{H} be satisfied. Assume that $\eps > 0$,  
$f_\eps \in L^1(0,T;H) + L^2(0,T;V')$, $u_\eps \in L^2(Q)$, $\theta_{0\eps} \in L^2(\Omega)$, 
$\chi_{0\eps} \in L^2(\Omega)$, and $\chi_{0\eps}(x) \in \overline{D(\beta)}$ for a.e. $x \in \Omega$. Then there exists a unique pair $(\thetaep,\chiep)$ such that 
\begin{alignat}{3}
  & \thetaep \in L^2(0,T;V) \cap C(\clint{0,T};H), \label{thep reg}\\
  & \thetaep' \in L^1(0,T;H) + L^2(0,T;V'), \label{thep'reg}\\
  & \chiep \in  H^1(0,T;H), \label{chep reg}\\
  & \thetaep' + \chiep' + A\thetaep = f_\eps
  	& \qquad & \text{in $V'$, a.e. in $\opint{0,T}$}, \label{eneqep} \\
  & \eps\chiep' + \beta\big(\chiep\big) \ni \gamma\big(\thetaep + u_\eps\big)
  	& \qquad & \text{a.e. in $Q$}, \label{phreleps in} \\
  &  \thetaep(0) = \theta_{0\eps}, \quad \chiep(0) = \chi_{0\eps} 
  	& \qquad & \text{a.e. in $\Omega$}. \label{chi-i.c.ep}
\end{alignat}
\end{Thm}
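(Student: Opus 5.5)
The natural route is to view the system as a heat equation for $\thetaep$ coupled with an ODE in $H$ for $\chiep$. The ODE $\eps\chiep' + \beta(\chiep) \ni \gamma(\thetaep+u_\eps)$ is governed by the maximal monotone operator induced by $\beta$ on $H$. That operator is the subdifferential of $v\mapsto\intom\widehat\beta(v)$, where $\widehat\beta$ is a convex primitive of $\beta$. The hypothesis $\chi_{0\eps}\in\overline{D(\beta)}$ a.e. says that $\chi_{0\eps}$ lies in the closure of the domain of this operator.

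\textbf{Step 1: solving the ODE for frozen temperature.} For any forcing $h\in L^2(0,T;H)$, the Brezis theory (cf. \cite{Bre73}) for subdifferentials gives a unique $\chi\in H^1(0,T;H)$ solving $\eps\chi'+\beta(\chi)\ni h$ with $\chi(0)=\chi_{0\eps}$. The $H^1$ regularity holds even when the datum is only in the closure of the domain, by the subdifferential smoothing effect. This requires $t\mapsto\intom\widehat\beta(\chi(t))$ to be controlled, and I would handle it by integrating against $t\chi'$ or by approximating the datum. The map $h\mapsto\chi$ is Lipschitz: monotonicity gives $\eps\|\chi_1(t)-\chi_2(t)\|_H\le\int_0^t\|h_1-h_2\|_H$.

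\textbf{Step 2: solving the heat equation for given phase.} Given $\chi$, I substitute $\chi'$ in \eqref{eneqep}, so that $\theta'+A\theta=f_\eps-\chi'$ with right-hand side in $L^1(0,T;H)+L^2(0,T;V')$. This linear parabolic problem has a unique solution in $L^2(0,T;V)\cap C([0,T];H)$, with $\theta'$ in the required space.

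\textbf{Step 3: fixed point.} To avoid estimating $\chi'$ in the contraction, I would rewrite \eqref{eneqep} as $\theta'+A\theta=f_\eps-\eps^{-1}\bigl(\gamma(\theta+u_\eps)-\eta\bigr)$, where $\eta=\gamma(\theta+u_\eps)-\eps\chi'\in\beta(\chi)$. The difficulty is that $\gamma$ is only continuous, not Lipschitz, so Banach's theorem does not apply directly. I would therefore use one of the following, trying the first:
\begin{itemize}
\item Approximate $\gamma$ by Lipschitz increasing $\gamma_k$ with uniform sublinear bound, and solve each approximate problem by contraction on $C([0,T];H)$ over short time intervals, iterated.
\item Apply Schauder's theorem to $\theta\mapsto\chi\mapsto\theta$ on $L^2(Q)$, using the compactness provided by Aubin--Lions.
\end{itemize}
Uniform a priori estimates come from testing \eqref{eneqep} with $\thetaep$ and \eqref{phreleps in} with $\chiep'$. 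The sublinearity \eqref{gamma sublin} gives $\|\gamma(\thetaep+u_\eps)\|_{L^2(Q)}\le C(1+\|\thetaep\|_{L^2(Q)})$, and the resulting bound $\eps\|\chiep'\|_{L^2(Q)}\lesssim\|\gamma(\thetaep+u_\eps)\|_{L^2(Q)}$ closes a Gronwall argument. These bounds yield $\theta_k\to\theta$ strongly in $L^2(Q)$ and $\chi_k\convergedeb\chi$ weakly in $H^1(0,T;H)$. Continuity of $\gamma$ together with sublinearity then gives $\gamma_k(\theta_k+u_\eps)\to\gamma(\theta+u_\eps)$ in $L^2(Q)$, and the maximal monotone inclusion passes to the limit by the standard strong--weak closure, using a limsup argument on $\eta_k$.

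\textbf{Step 4: uniqueness, the main obstacle.} Since $\gamma$ is merely monotone and continuous, I would use an $L^1$-contraction argument. For two solutions, subtract the equations, test the energy equation with $\sign_\delta(\theta_1-\theta_2)$ (a smooth approximation of $\sign$), and add the phase inclusion multiplied by $\sign(\chi_1-\chi_2)$. Monotonicity of $\gamma$ gives the sign of $(\gamma_1-\gamma_2)\sign(\theta_1-\theta_2)\ge0$, and monotonicity of $\beta$ handles the term $(\eta_1-\eta_2)\sign(\chi_1-\chi_2)\ge0$. Combining these as in Visintin's $L^1$ technique, one must show that $\frac{d}{dt}\intom\bigl(|\theta_1-\theta_2|+|\chi_1-\chi_2|\bigr)\le0$, the crucial cross terms
\[
\eps^{-1}(\gamma_1-\gamma_2-\eta_1+\eta_2)\bigl(\sign(\chi_1-\chi_2)-\sign(\theta_1-\theta_2)\bigr)
\]
being nonpositive by checking signs case by case. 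Rigorously justifying the test with $\sign$, given that $\theta'$ lies only in $L^1(0,T;H)+L^2(0,T;V')$, via the $\sign_\delta$ regularization is where I expect the real work to be.
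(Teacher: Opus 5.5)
Your architecture matches the paper's. Brezis' theory handles the phase equation with frozen temperature; the linear heat equation follows; there is a fixed point for a Lipschitz regularization of $\gamma$ (the paper uses the Yosida approximation $\gamma_\lambda$), estimates from testing by $\theta$ and $\chi'$, and Aubin--Lions plus the limsup closure for $\beta$. Uniqueness is Visintin's $L^1$-contraction, with exactly your case-by-case sign check on the cross term.

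The genuine gap is in Step 1. For $\eps\chi'+\partial\phi(\chi)\ni h$ with $\chi(0)\in\overline{D(\phi)}$, the smoothing effect gives only $\sqrt{t}\,\chi'\in L^2(0,T;H)$. Getting $\chi'\in L^2(0,T;H)$ requires $\chi(0)\in D(\phi)$, i.e.\ $\widehat\beta(\chizep)\in L^1(\Omega)$ (Theorem 3.6 of \cite{Bre73}). No test function or approximation of the datum can repair this, because the claim is false.

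Take $\beta(r)=1/(1-r)$ on $D(\beta)=(-\infty,1)$ and $\chizep\equiv 1\in\overline{D(\beta)}$. For a.e.\ $x$, $y:=1-\chiep(x,\cdot)$ satisfies $\eps y'=1/y-g$ and $y(0)=0$, with $g:=\gamma(\thetaep+u_\eps)(x,\cdot)\in L^2(0,T)$. If $y'\in L^2(0,T)$, then $y(t)\le\sqrt{t}\,\|y'\|_{L^2(0,t)}$ with $\|y'\|_{L^2(0,t)}\to0$, so $y(t)^{-2}\ge t^{-1}$ near $0$. This contradicts $1/y=\eps y'+g\in L^2(0,T)$. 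For $\gamma\equiv0$ one has explicitly $\chiep=1-\sqrt{2t/\eps}$.

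The same omission sits in your bound $\eps\|\chiep'\|_{L^2(Q)}\lesssim\|\gamma(\thetaep+u_\eps)\|_{L^2(Q)}$. Testing by $\chiep'$ produces $\intom\widehat\beta(\chiep(t))-\intom\widehat\beta(\chizep)$, and the initial term is not controlled.

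The paper's proof has the identical defect. It cites Theorems 3.4 and 3.6 of \cite{Bre73} for $\chi\in H^1(0,T;H)$. Between \eqref{ph_est_lamu} and \eqref{ph_est_lamu2} it drops the initial value $\intom[\phi_\beta(\chizep)-\beta^o(r_0)(\chizep-r_0)]$. So the statement really needs the extra hypothesis $\widehat\beta(\chizep)\in L^1(\Omega)$; under it, your plan and the paper's both go through.

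Two smaller points.

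First, in Step 3 a contraction in $C(\clint{0,T};H)$ does not close directly. Bounding $\thetatilde$ there from $\thetatilde'+A\thetatilde=-\chitilde'$ needs $\chitilde'$, and testing the phase difference by $\chitilde'$ leaves $(\eta_1-\eta_2)\chitilde'$, which has no sign. Your rewriting with $\eta$ meets the same obstruction through $(\eta_1-\eta_2)\thetatilde$. The paper instead integrates the energy equation in time to get $\thetatilde+\chitilde+A\widehat{\thetatilde}=0$. Testing by $\thetatilde$ gives $\normaq{\thetatilde}{L^2(0,t;H)}\le\int_0^t\normaq{\chitilde(s)}{H}\de s$. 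Combined with $\normaq{\chitilde(t)}{H}\le C\normaq{\Thetatilde}{L^2(0,t;H)}$, this makes an iterate of $\Theta\mapsto\theta$ a strict contraction on $L^2(0,T;H)$. Your Schauder alternative can also be made to work.

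Second, in Step 4 the regularity worry disappears. $f_\eps$ cancels in the difference, which solves a heat equation with right-hand side $-(\chi_1-\chi_2)'\in L^2(0,T;H)$ and zero initial datum. Lemma \ref{L:regolarita' eq. calore} then gives $\theta_1-\theta_2\in H^1(0,T;H)\cap L^\infty(0,T;V)$ with $\Delta(\theta_1-\theta_2)\in L^2(Q)$. So testing by $\sign_\mu(\theta_1-\theta_2)$ is legitimate for a.e.\ $t$.
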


We are interested to perform an asymptotic analysis of the solution of Problem \eqref{eneqep}-\eqref{chi-i.c.ep} as $\eps \searrow 0$. We will prove that this solution converges to the solution of the Stefan problem, whose formulation we recall in the following result (see, e.g., \cite{ColGra93, DamKenSat94,Vis96}).

\begin{Thm}
Let \emph{(A1)} and \emph{(A4)} of Assumptions \ref{H} be satisfied. Assume that $f$ $\in$ $L^2(0,T;V')+L^1(0,T;H)$, $\theta_0 \in H$, $\chi_0 \in H$, $u\in L^2(Q)$, $\chi_0 \in \alpha(\theta_0 + u(0))$ a.e. in $\Omega$, and
\begin{equation}\label{condonal}
  \exists s_0 \in  D(\alpha^{-1})\quad : \quad \int_{s_0}^{\chi_0} (\alpha^{-1})^o(s) \de s \in L^1(\Omega),
\end{equation}
where $(\alpha^{-1})^o(s)$ is defined according to \eqref{zeta^o}.
Then there exists a unique pair $(\theta,\chi)$ such that
\begin{alignat}{3}
  &\theta\in L^2(0,T;V) \cap H^1(0,T;V'), \label{wStef th} \\
  & \chi\in L^\infty(Q), \label{wStef ch} \\
  & \theta + \chi \in H^1(0,T;V') \label{wStef th+ch} \\
  & (\theta + \chi)'(t) + A\theta(t) = f(t)  & \qquad &     
      \text{in $V'$, for a.e. $t \in \opint{0,T}$,} \label{wStef pb 1} \\ 
  & \chi \in \alpha\big(\theta + u\big) & \qquad & \text{a.e. in $Q$},  
       \label{wStef pb 2} \\
  &(\theta+\chi)(0)=\theta_0+\chi_0 & \qquad& \text{in $V'$}. \label{wStef pb 3}
\end{alignat}
\end{Thm}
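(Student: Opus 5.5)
The plan is to reformulate the problem as a single degenerate parabolic equation for the enthalpy. Set $w := \theta + \chi$ and consider the relation
\[
w \in \mathcal{E}(\theta) := \theta + \alpha(\theta + u),
\]
so that \eqref{wStef pb 2} becomes $\theta = \mathcal{E}^{-1}(w)$. Because $\alpha$ is maximal monotone, for a.e. fixed $(x,t)$ the map $r \mapsto r + \alpha(r+u(x,t))$ is maximal monotone and strongly monotone. Its inverse $\eta_{(x,t)}(w) := \big(I + \alpha(\cdot + u(x,t))\big)^{-1}(w)$ is therefore single-valued, nondecreasing, $1$-Lipschitz, and satisfies $|\eta(w) - w| \le M$ by (A4). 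The problem then reads
\[
w' + A\,\eta(w) = f, \qquad w(0) = \theta_0+\chi_0 .
\]

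\textbf{Uniqueness.} Take two solutions $(\theta_i,\chi_i)$ and subtract the equations. The difference $w_1 - w_2$ belongs to $H^1(0,T;V')$ with zero initial datum. Test with $N(w_1-w_2)$, where $N = A^{-1}$ (this is invertible if $|\Gamma_0|>0$; otherwise use $A+I$ and Gronwall), and integrate over $(0,t)$. This gives
\[
\tfrac12\|w_1-w_2\|_{V'}^2(t) + \int_0^t (\theta_1-\theta_2,\, w_1-w_2)_H \,ds = 0 .
\]
Since $\chi_i \in \alpha(\theta_i+u)$ with the same $u$, the monotonicity of $\alpha$ gives $(\theta_1-\theta_2)(\chi_1-\chi_2) \ge 0$. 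Hence $(\theta_1-\theta_2)(w_1-w_2) \ge |\theta_1-\theta_2|^2$, and we conclude $\theta_1 = \theta_2$ and $w_1 = w_2$, so also $\chi_1 = \chi_2$. This is the classical Hilbert-space argument.

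\textbf{Existence.} I would approximate by Yosida regularization: replace $\alpha$ by $\alpha_\lambda$, which is Lipschitz, bounded by $M$, and satisfies $|\alpha_\lambda(r)| \le |\alpha^o(r)|$. Also regularize $f$ so that it lies in $L^2(0,T;H)$, and regularize the initial data. For the regularized equation $(\theta+\alpha_\lambda(\theta+u))' + A\theta = f$ with Lipschitz monotone nonlinearity, existence follows by Faedo--Galerkin or by the theory of Lipschitz perturbations; if $u$ is not time-regular, one can handle it through Lipschitz dependence. The uniform estimates come from two tests.
\begin{itemize}
\item[(i)] Test with $\theta_\lambda$. We have $(\alpha_\lambda(\theta_\lambda+u))'\theta_\lambda = \partial_t \widehat{\Phi}_\lambda$, where $\widehat{\Phi}_\lambda$ is the Legendre-type primitive $\int^{\chi}(\alpha_\lambda^{-1})(s)\,ds$ minus the $u$-correction term. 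The convex-conjugate identity gives a bound on $\|\theta_\lambda\|_{L^\infty(H)\cap L^2(V)}$ in terms of the quantity in \eqref{condonal}, and $\chi_\lambda$ is bounded by $M$ in $L^\infty(Q)$.
\item[(ii)] From the equation itself, $w_\lambda' = f - A\theta_\lambda$ is bounded in $L^2(V') + L^1(H)$. Since $\chi_\lambda$ is bounded in $L^\infty$, the bound on $\theta_\lambda$ in $L^2(V)$ then yields $\theta_\lambda$ and $w_\lambda$ bounded in $H^1(0,T;V')$ when $f\in L^2(V')$; the $L^1(H)$ part is absorbed analogously using the $L^\infty(H)$ bound.
\end{itemize}

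\textbf{Passage to the limit and main obstacle.} Weak and weak-star compactness gives limits $\theta$, $\chi$, $w$, and the linear equation passes to the limit directly. The main obstacle is identifying $\chi \in \alpha(\theta+u)$, since only weak convergence of $\chi_\lambda$ is available. I would first try to obtain strong convergence of $\theta_\lambda$ in $L^2(Q)$ by Aubin--Lions, using the bounds in $L^2(V)\cap H^1(V')$ from (ii). Then $\limsup \iint \chi_\lambda(\theta_\lambda+u) = \iint \chi(\theta+u)$, and the standard Minty/closedness lemma for the Yosida approximations of maximal monotone graphs yields the inclusion. If the $H^1(V')$ bound on $\theta_\lambda$ fails in the $L^1(H)$ case, I would fall back to the $\limsup$ trick on $w_\lambda$: compute $\limsup \int (w_\lambda',N(\cdot))$ through the Cauchy property of the $V'$-contraction estimate from the uniqueness step, which shows that $\{w_\lambda\}$ is Cauchy in $C([0,T];V')$ and $\{\theta_\lambda\}$ is Cauchy in $L^2(Q)$. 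This gives the strong convergence directly.
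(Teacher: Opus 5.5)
The paper does not prove this theorem. It quotes it from the literature, and its only related argument is Lemma \ref{uniqintStef}: uniqueness in the Baiocchi--Duvaut--Fr\'emond sense, obtained by integrating in time and testing with $\theta_1-\theta_2$.

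\textbf{What works.} Your uniqueness step is a correct $V'$-norm variant of that lemma. It is valid even when $f$ has an $L^1(0,T;H)$ component, because $(w_1-w_2)'=-A(\theta_1-\theta_2)\in L^2(0,T;V')$.

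Your fallback Cauchy argument is also sound, and it should be your main route instead of Aubin--Lions. The Yosida inequality $(\alpha_\lambda(r)-\alpha_\mu(s))(r-s)\ge-\frac{\lambda+\mu}{4}M^2$ applies directly, since the shift by $u$ is the same in both inclusions. It makes $\theta_\lambda$ Cauchy in $L^2(Q)$ and $w_\lambda$ Cauchy in $C([0,T];V')$ without any bound on $\theta_\lambda'$. Your $\limsup$ argument then identifies $\chi\in\alpha(\theta+u)$.

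One caution on the initial data. The approximate datum must satisfy $\theta_\lambda(0)+\alpha_\lambda(\theta_\lambda(0)+u(0))=\theta_0+\chi_0$. If you impose $\theta_\lambda(0)=\theta_0$ instead, the limit has $w(0)=\theta_0+\alpha^o(\theta_0+u(0))$, which is the wrong initial condition wherever $\alpha$ is multivalued.

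\textbf{First gap: step (ii).} A bound on $w_\lambda'$ together with $|\chi_\lambda|\le M$ says nothing about $\theta_\lambda'=w_\lambda'-\chi_\lambda'$. None of your estimates controls $\chi_\lambda'$, and $\alpha_\lambda$ is only $1/\lambda$-Lipschitz. In fact the conclusion is false, so no argument can supply it.

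Take $\Gamma_0=\emptyset$, $\alpha=\sign$, $f=0$, $\theta_0=0$, $\chi_0=1$, and $u=K>2$ for $t<T/2$, $u=-K$ for $t>T/2$. Set $(\theta,\chi)=(0,1)$ for $t<T/2$ and $(2,-1)$ for $t>T/2$. This pair has $\theta+\chi\equiv1$, $A\theta=0$ and $\chi\in\sign(\theta+u)$. By your own uniqueness argument it is the only candidate, yet $\theta$ jumps, so $\theta\notin H^1(0,T;V')$.

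Time regularity of $u$ does not cure this. Replacing the jump by a $W^{1,1}$ ramp with $u'\notin L^2$ gives $\theta'=-u'$ on the ramp, so again $\theta\notin H^1(0,T;V')$.

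The $L^1(0,T;H)$ part of $f$ cannot be ``absorbed'' either. With $\alpha\equiv0$, which (A4) allows, one has $\chi\equiv0$ and $\theta'=f-A\theta$. The choice $f(t)=t^{-3/4}g$ with $g\in H\setminus\{0\}$ lies in $L^1(0,T;H)$ but not in $L^2(0,T;V')$, so even \eqref{wStef th+ch} fails.

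What your scheme can deliver, given the extra hypothesis on $u$ discussed below, is $\theta\in L^2(0,T;V)$ and $(\theta+\chi)'\in L^2(0,T;V')+L^1(0,T;H)$. The regularity claims of the statement must be weakened accordingly.

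\textbf{Second gap: step (i).} For $u$ merely in $L^2(Q)$, $\chi_\lambda=\alpha_\lambda(\theta_\lambda+u)$ need not be differentiable in time. Your energy estimate uses $\chi_\lambda'\theta_\lambda=\chi_\lambda'(\theta_\lambda+u)-\chi_\lambda'u$, which produces the correction term $\int_0^t\!\int_\Omega\chi_\lambda' u$. That term can only be handled by integrating by parts in time. This needs something like $u\in W^{1,1}(0,T;L^1(\Omega))$, after which $|\chi_\lambda|\le M$ suffices; that is the hypothesis under which the paper actually uses the result in Theorem \ref{mainthm}.

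Without it you lose the $L^2(0,T;V)$ bound. The $V'$-estimate alone yields only $\theta\in L^2(Q)$ and $\widehat\theta\in L^\infty(0,T;V)$, that is, the weaker Baiocchi--Duvaut--Fr\'emond solution of Lemma \ref{uniqintStef}.
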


Here is our main convergence theorem, where the symbols $\convergedeb$ and $\convergedebstar$ denote respectively the weak convergence and the weak-star convergence.

\begin{Thm}\label{mainthm}
Let Assumptions \ref{H} be satisfied. Assume that $f \in L^2(0,T;H)$, $u\in W^{1,1}(0,T;L^1(\Omega))$, $\Delta u \in L^1(Q)$, $\theta_0 \in H$, $\chi_0 \in L^\infty(\Omega)$,  $\chi_0 \in \alpha(\theta_0 + u(0))$ a.e. in $\Omega$, and \eqref{condonal} holds. For every $\eps > 0$ assume that  $f_\eps \in L^2(0,T;H)\cap BV(\clint{0,T};L^1(\Omega))$, $u_\eps \in C(\clint{0,T};H^1(\Omega))$, $u_\eps', \Delta u_\eps \in BV(\clint{0,T};L^1(\Omega))$, 
$u_\eps(t)|_\Gamma = u_\eps(0)|_\Gamma$ for a.e. $t\in \clint{0,T}$, $\thetazep \in V$, $\Delta\thetazep \in L^1(\Omega)$, 
$\chi_{0\eps} \in L^\infty(\Omega)$, and $\chi_{0\eps}(x) \in \overline{D(\beta)}$ for a.e. $x \in \Omega$.
Let $(\thetaep,\chiep)$ be the solution of  \eqref{eneqep}-\eqref{chi-i.c.ep}, and $(\theta,\chi)$ be the solution \eqref{wStef th}-\eqref{wStef pb 3}.
If
 \begin{align}
& \sup_{\eps > 0}(\norma{f_\eps}{BV(\clint{0,T};L^1(\Omega))} + \norma{u'_\eps}{BV(\clint{0,T};L^1(\Omega))} + \norma{\Delta u_\eps}{BV(\clint{0,T};L^1(\Omega))})< \infty, \\ 
& \sup_{\eps > 0}\norma{u_\eps}{L^2(Q)}< \infty,\\
&   \gamma(\thetazep(x)+u_\eps(x,0)) - \beta(\chizep(x)) \subseteq \clint{-\eps,\eps} \qquad \text{for a.e. $x \in \Omega$},
\end{align} 
and if
\begin{alignat}{3}
& f_\eps \to f & \quad & \text{in $L^2(0,T;H)$}, \label{f->} \\
& u_\eps \to u & \qquad & \text{in $W^{1,1}(0,T;L^1(\Omega))$}, \\
& \Delta u_\eps \to \Delta u & \quad & \text{in $L^1(0,T;L^1(\Omega))$},  \label{u->}\\
 &\thetazep \to \theta_0,
 & \qquad & \text{in $L^2(\Omega)$}, \label{ic->} \\
 &\chizep \to \chi_0
 & \qquad & \text{in $L^\infty(\Omega)$}, \label{ic->2} 
\end{alignat}
then (along the whole set of indexes $\eps$)
\begin{alignat}{3}
  & \thetaep \to \theta & \quad & \text{in $L^1(Q)$}, 
  \\
  & \thetaep \convergedeb \theta & \quad & \text{in $L^2(Q)$}, \\
  & \chiep \to \chi & \quad & \text{in $L^1(Q)$}, \\
  & \chiep \convergedebstar \chi & \quad & \text{in $L^\infty(Q)$}.
\end{alignat}
\end{Thm}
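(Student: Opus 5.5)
We will prove Theorem \ref{mainthm} by combining $\eps$-independent a priori estimates, a compactness argument, identification of the limit, and the uniqueness of the Stefan problem. Set $\sigma_\eps := \gamma(\thetaep + u_\eps) - \eps\chiep'$, so that $\sigma_\eps \in \beta(\chiep)$ a.e. in $Q$ and $\eps\chiep' = \gamma(\thetaep+u_\eps) - \sigma_\eps$. The first goal is to show that this discrepancy vanishes in $L^1(Q)$.

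The first step is an $L^1$-estimate in the spirit of \cite{Vis01} on the time derivatives. Differentiate formally in time (rigorously, take difference quotients $\thetaep(t+h)-\thetaep(t)$) both the energy balance \eqref{eneqep} and the relaxation law \eqref{phreleps in}. Test the first against a Lipschitz approximation of $\sign(\partial_t(\thetaep+u_\eps))$, or rather of $\sign(\gamma(\thetaep+u_\eps)^{\boldsymbol{\cdot}})$, and the second against $\sign(\chiep')$. Adding the results, the monotonicity of $\gamma$ and $\beta$ yields
\[
\norma{\thetaep'}{L^\infty(0,T;L^1(\Omega))} + \norma{\chiep'}{L^\infty(0,T;L^1(\Omega))}\le C.
\]
The constant is controlled by the $BV(\clint{0,T};L^1(\Omega))$ bounds on $f_\eps,u_\eps',\Delta u_\eps$, by $\Delta\thetazep\in L^1$, and by the initial compatibility $\gamma(\thetazep+u_\eps(0))-\beta(\chizep)\subseteq\clint{-\eps,\eps}$. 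The last condition gives $\norma{\chiep'(0)}{L^1}\le|\Omega|$ uniformly. We also need $u_\eps(t)|_\Gamma$ to be constant in time, so that $u_\eps'$ produces no boundary terms.

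As a consequence we get $\norma{\gamma(\thetaep+u_\eps)-\sigma_\eps}{L^\infty(0,T;L^1)}=\eps\norma{\chiep'}{}\le C\eps$. Next comes the $L^\infty$ bound on $\chiep$. Here we use (A4) and (A5): by the compatibility \eqref{Hyp gamma-beta-alfa}, the set $[-M,M]$ should be invariant, since for $|s|>M$ we have $\gamma(r)\notin\beta(s)$. Combined with a comparison (maximum principle) argument on the ODE \eqref{phreleps in} and with $\chizep$ bounded, this gives $\sup_\eps\norma{\chiep}{L^\infty(Q)}<\infty$. With $\chiep$ bounded and $f_\eps$ bounded in $L^2(0,T;H)$, testing \eqref{eneqep} with $\thetaep$ gives the standard $L^2(0,T;V)\cap L^\infty(0,T;H)$ bound. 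The term $(\chiep',\thetaep)$ is handled by integrating by parts in time, using the bounded $\chiep$ and the $L^1$ bound on $\chiep'$ together with Gronwall.

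For compactness, $\thetaep$ is bounded in $L^2(0,T;V)\cap W^{1,1}(0,T;L^1)$ and $\chiep$ is bounded in $W^{1,1}(0,T;L^1)\cap L^\infty$. Space compactness for $\chiep$ is missing, so we compact only $\thetaep$ by Aubin–Lions–Simon, which gives $\thetaep\to\theta$ strongly in $L^1(Q)$ and weakly in $L^2(Q)$ along subsequences. We also have $\chiep\convergedebstar\chi$. To pass to the limit in the inclusion we use monotonicity. We know $\sigma_\eps\in\beta(\chiep)$ and $\gamma(\thetaep+u_\eps)\to\gamma(\theta+u)$ strongly in $L^1$, by continuity and sublinearity of $\gamma$ with Vitali. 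Then $\sigma_\eps\to\gamma(\theta+u)$ in $L^1$, and the standard weak–strong closure of the maximal monotone $\beta$ (via $\limsup\int\sigma_\eps\chiep\le\int\gamma(\theta+u)\chi$) yields $\gamma(\theta+u)\in\beta(\chi)$, i.e. $\chi\in\alpha(\theta+u)$ by \eqref{Hyp gamma-beta-alfa}.

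The limit equation \eqref{wStef pb 1} and the initial condition then follow linearly. By uniqueness of the Stefan problem the whole family converges. Strong $L^1$ convergence of $\chiep$ is the main obstacle. We will try an $L^1$-contraction argument comparing $\chiep$ with $\chi$ through the energy equation. Alternatively, since $\theta+\chi$ has limit regularity, we would show $\thetaep+\chiep\to\theta+\chi$ strongly by compactness of $\thetaep+\chiep$, which is bounded in $W^{1,1}(0,T;L^1)$ with $(\thetaep+\chiep)'$ controlled in $L^2(0,T;V')+L^1$. We then combine this with strong convergence of $\thetaep$, so that $\chiep=(\thetaep+\chiep)-\thetaep$ converges strongly. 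This last route seems most promising.
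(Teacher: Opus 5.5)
\paragraph{Verdict.} There is a genuine gap: the conclusion $\chiep\to\chi$ strongly in $L^1(Q)$ is not proved. The rest of your argument is sound.

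\paragraph{What works.} Several steps differ from the paper's and are correct.
\begin{itemize}
\item You get an $L^2(0,T;V)$ bound by integrating $\int_0^t(\chiep',\thetaep)$ by parts in time. This step uses the $L^1(Q)$ bound on $\thetaep'$ (not on $\chiep'$) together with $\sup_\eps\|\chiep\|_{L^\infty(Q)}<\infty$.
\item Aubin--Lions--Simon then gives compactness of $\thetaep$.
\item You close the graph of $\beta$ by a Minty argument: pair $\sigma_\eps\to\gamma(\theta+u)$ strongly in $L^1(Q)$ with $\chiep\convergedebstar\chi$, localize on measurable sets $E\subseteq Q$, and test against a countable dense subset of the graph of $\beta$. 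This correctly yields $\gamma(\theta+u)\in\beta(\chi)$ without any strong convergence of $\chiep$.
\end{itemize}
The paper instead obtains strong $L^1$ compactness of both unknowns and closes the graph pointwise. Since you do not show $\theta\in H^1(0,T;V')$, the uniqueness you invoke must be that of the integrated formulation, Lemma \ref{uniqintStef}.

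\paragraph{The gap.} Your preferred route to $\chiep\to\chi$ in $L^1(Q)$ is circular.
\begin{itemize}
\item The bounds on $\thetaep+\chiep$ in $W^{1,1}(0,T;L^1(\Omega))$, and on its derivative in $L^2(0,T;V')+L^1(0,T;H)$, give compactness in time only.
\item In space you only have boundedness in $L^\infty(0,T;H)$. This gives compactness in $L^2(0,T;V')$, not in $L^1(Q)$.
\item Once $\thetaep$ is compact, $L^1$-compactness of $\thetaep+\chiep$ is the same statement as $L^1$-compactness of $\chiep$.
\end{itemize}

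\paragraph{Why your bounds cannot suffice.} Take $\beta=\sign^{-1}$, $\gamma=\mathrm{id}$, $\alpha=\sign$, $f_\eps=u_\eps=\thetazep=0$ and $\chizep(x)=\frac12\sin(x_1/\eps)$.
\begin{itemize}
\item (A1)--(A5) and the compatibility condition on the initial data all hold.
\item The solution is $\thetaep\equiv0$, $\chiep\equiv\chizep$, and every estimate you use is satisfied.
\item Yet $\chiep$ has no strongly convergent subsequence.
\end{itemize}
What excludes this in the theorem is the strong convergence of $\chizep$ (and of $\thetazep$, $f_\eps$, $u_\eps$), which your argument never exploits.

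\paragraph{The missing ingredient.} This is the paper's spatial-translation estimate, Lemma \ref{stima3}.
\begin{itemize}
\item Run the same $L^1$-contraction you used for time increments, now on space increments $\tau_k$. Test the translated energy balance by $\sign_\mu(\tau_k\thetabarep)$.
\item Use the pointwise inequality $(\tau_k\chiep)'\sign_0(\tau_k\thetabarep)\ge\partial_t|\tau_k\chiep|$, which follows from the monotonicity of $\gamma$ and $\beta$. The coupling with the energy balance removes the factor $1/\eps$ that the ODE alone would produce.
\item This bounds $\sup_t\|\tau_k\chiep(t)\|_{L^1(\Omega_j)}$ by the translations of $\thetazep$, $\chizep$, $u_\eps$ and $\overline{f}_\eps$. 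These are small uniformly in $\eps$ by the assumed strong convergences.
\item Together with your bound on $\chiep'$ in $L^1(Q)$ and the uniform $L^\infty$ bound, Riesz--Fr\'echet--Kolmogorov gives strong compactness of $\chiep$ in $L^1(Q)$.
\end{itemize}
To localize to $\Omega_j$ without boundary terms, test with $\zeta\sign_\mu(\tau_k\thetabarep)$ for some $\zeta\in C_c^\infty(\Omega_j)$. The extra term is at most $\|\Delta\zeta\|_\infty\int_0^t\|\tau_k\thetabarep\|_{L^1(\Omega_j)}$, which your $L^2(0,T;V)$ bound makes uniformly small.
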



\section{Analysis of the relaxed Stefan problem}\label{S:Peps}

We start by stating a regularity result for the heat equation in its weak formulation. 

\begin{Lem}\label{L:regolarita' eq. calore}
Let \emph{(A1)} of Assumptions \ref{H} be satisfied. If $F \in L^2(0,T;H)$ and $w_0 \in V$, then there exists a unique function $w$ such that
\begin{alignat}{3}
  & w \in L^\infty(0,T;V) \cap H^1(0 ,T;H), &\ A w \in L^2(0,T;H), \label{reg1} \\
  & A w = - \Delta w, &  \label{reg2}\\	
  & w' -\Delta w = F & \qquad & \text{a.e. in}\ Q, \label{reg3}\\
  & w(0) = w_0 & \qquad & \text{a.e. in}\ \Omega. \label{reg4}
\end{alignat}
\end{Lem}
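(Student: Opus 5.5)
Existence will come from a Faedo--Galerkin scheme built on the eigenfunctions of $A$. Uniqueness is immediate from linearity and an $L^2$ estimate. We read the equality $Aw=-\Delta w$ in \eqref{reg2} in the sense of distributions; it holds because $\fdual{Aw}{\varphi}=\intom\nabla w\cdot\nabla\varphi$ for $\varphi\in\mathscr{D}(\Omega)\subset V$.

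\emph{Step 1 (spectral basis).} Since $V\subset H$ is compact and dense, and $A+I:V\to V'$ is coercive and symmetric, there is an orthonormal basis $\{e_k\}$ of $H$ consisting of eigenfunctions, $Ae_k=\lambda_k e_k$ with $\lambda_k\ge0$. This basis is also orthogonal in $V$.

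\emph{Step 2 (Galerkin system).} Let $w_m(t)=\sum_{k\le m}c_k(t)e_k$ solve the linear ODE system
\[
(w_m',e_k)_H+\fdual{Aw_m}{e_k}=(F,e_k)_H,\qquad k\le m,
\]
with $w_m(0)=P_mw_0$, where $P_m$ is the $H$-orthogonal projection onto $\mathrm{span}\{e_1,\dots,e_m\}$. Because the basis is spectral, $P_m$ is also $V$-contractive, so $\norm{P_mw_0}{V}\le\norm{w_0}{V}$. The system has a unique solution in $H^1(0,T)^m$.

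\emph{Step 3 (a priori estimates).} Test the system with $w_m'$, which is allowed since $w_m'$ lies in the span. This gives
\[
\normaq{w_m'}{H}+\tfrac12\tfrac{d}{dt}\normaq{\nabla w_m}{L^2(\Omega;\erre^n)}=(F,w_m')_H .
\]
Young's inequality then yields bounds on $w_m'$ in $L^2(0,T;H)$ and on $\nabla w_m$ in $L^\infty(0,T;L^2)$. Combining with $w_m(t)=w_m(0)+\integr w_m'$ bounds $w_m$ in $L^\infty(0,T;V)\cap H^1(0,T;H)$.

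Next, $Aw_m=P_m(F-w_m')$ lies in the span and $P_m$ is an $H$-contraction. Hence $\norm{Aw_m}{L^2(0,T;H)}\le\norm{F}{L^2(0,T;H)}+\norm{w_m'}{L^2(0,T;H)}$, which is bounded. This uniform bound on $Aw_m$ is the one point where the spectral choice of basis is essential, and it is the main (though mild) obstacle.

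\emph{Step 4 (passage to the limit).} Extract subsequences with
\begin{itemize}
\item $w_m\convergedebstar w$ in $L^\infty(0,T;V)$,
\item $w_m'\convergedeb w'$ in $L^2(0,T;H)$,
\item $Aw_m\convergedeb \eta$ in $L^2(0,T;H)$.
\end{itemize}
Since $A$ is linear and continuous from $L^2(0,T;V)$ to $L^2(0,T;V')$, we get $\eta=Aw$, so $Aw\in L^2(0,T;H)$.

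Passing to the limit in the Galerkin equations, first for fixed $k$ and then by density of the span in $H$, gives $w'+Aw=F$ in $L^2(0,T;H)$. With \eqref{reg2} this is \eqref{reg3} a.e. in $Q$. The initial condition \eqref{reg4} follows because $w_m(0)=P_mw_0\to w_0$ in $H$ and $w_m\to w$ weakly in $H^1(0,T;H)\subset C(\clint{0,T};H)$, so that $w_m(0)\convergedeb w(0)$ in $H$.

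\emph{Step 5 (uniqueness).} The difference $z$ of two solutions satisfies $z'+Az=0$ and $z(0)=0$. Test with $z$, using $\tfrac{d}{dt}\tfrac12\normaq{z}{H}=(z',z)_H$, which holds since $z\in H^1(0,T;H)$. This gives $\tfrac12\normaq{z(t)}{H}+\integr\normaq{\nabla z}{}\le0$, hence $z=0$.
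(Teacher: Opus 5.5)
Your proposal is correct, but it follows a different route from the paper's. The paper does not prove the existence, uniqueness and regularity part at all. It invokes ``a standard regularity result for parabolic equations'' to obtain \eqref{reg1}, \eqref{reg4} and $w'+Aw=F$ in $V'$. It then identifies $-Aw$ with the distributional Laplacian by means of the generalized Green formula with the normal trace $\gamma_{\mathbf n}$ in $H^{-1/2}(\Gamma)$; the boundary term vanishes because $\varphi$ has compact support.

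You instead prove the cited result yourself with a spectral Faedo--Galerkin scheme. You obtain \eqref{reg2} directly from the definition of the weak derivatives $\partial_i w$ tested against $\partial_i\varphi$. That is all that is actually needed: for compactly supported test functions no boundary term ever arises, so the Green formula and its trace operators are superfluous.

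The key point of your argument is the choice of the eigenbasis of $A$. It makes $P_m$ both an $H$-contraction and a $V$-contraction, and it keeps $Aw_m$ inside the Galerkin space, so that $Aw_m=P_m(F-w_m')$ is bounded in $L^2(0,T;H)$. This yields $Aw\in L^2(0,T;H)$ without any elliptic $H^2$-regularity, which would not be available for mixed Dirichlet--Neumann conditions on a merely Lipschitz domain.

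The paper's route buys brevity. The cited result can be, for instance, the regularity theorem for $w'+\partial\phi(w)\ni F$ with $\phi(v)=\frac12\int_\Omega|\nabla v|^2$ on $V$, $F\in L^2(0,T;H)$ and $w_0\in D(\phi)=V$. Your route buys a self-contained argument, and your Step 5 makes explicit the uniqueness assertion of the lemma, which the paper leaves implicit in the citation.
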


\begin{proof}
Conditions \eqref{reg1}, \eqref{reg4}, and the fact that $w' + Aw = F$ in $V'$, a.e. in $\opint{0,T}$, follow from a standard regularity result for parabolic equations. Let us now recall the following generalized Green formula holding for every $v \in H^{1}(\Omega)$, $\vett v \in L^2_\div(\Omega)^d = \{\vett v \in L^2(\Omega)\ :\ \div \vett v \in L^2(\Omega)\}$ (cf. \cite[p. 239-240]{DutLio88}):
\[
  \int_\Omega \vett v \cdot \nabla v = - \int_\Omega v \,\mathrm{div} {\vett v} + 
  {_{H^{-1/2}(\Gamma)}\langle \gamma_{\vett n} \mathbf{v},\gamma_0v\rangle_{H^{1/2}(\Gamma)}},
\]
where $\gamma_0 : H^1(\Omega) \function H^{1/2}(\Gamma)$ and $\gamma_{\vett n} : \L^2_\div(\Omega) \function H^{-1/2}(\Gamma)$ are the unique linear continuous surjective (trace) operators such that 
$\gamma_{0}(v) = v|_{\Gamma}$ for every $v \in C^{\infty}(\overline{\Omega})\cap H^1(\Omega)$ and
$\gamma_{\vett n}(\vett w) = \vett w|_{\Gamma} \cdot \mathbf{n}$ for every $w \in C^{\infty}(\overline{\Omega})^d\cap L^2_\div(\Omega)$ (see \cite{Ada75} for the definition of fractional Sobolev spaces).
For all $\varphi \in C_c^\infty(\Omega)$ we have therefore
\begin{align}
  \distrdual{\Delta w}{\varphi} 
    & = \distrdual{w}{\Delta \varphi} = \intom w \Delta \varphi 
	    \notag \\
	& = - \intom \nabla w \cdot \nabla \varphi + 
		{_{H^{-1/2}(\Gamma)}\langle \gamma_{\vett n} \nabla \varphi,\gamma_0w\rangle_{H^{1/2}(\Gamma)}} 
		\notag \\ 
    & = - \intom \nabla w \cdot \nabla \varphi =
	    - \fdual{Aw}{\varphi} = - (Aw,\varphi)_H = - \intom (Aw) \varphi,   \notag
\end{align}
a.e. in $\opint{0,T}$,
whence $-Aw$ is equal to the distributional laplacian $\Delta w$, and it follows that \eqref{reg2}, \eqref{reg3} hold.
\end{proof}

Now we can prove that problem \eqref{eneqep}--\eqref{chi-i.c.ep} has at most one solution.
We will use the auxiliary functions $\sign_\mu : \erre \function \erre$ for $\mu > 0$ and $\sign_0 : \erre \function \erre$ defined by
\begin{align}
& \sign_\mu(r) = \min\{\max\{r/\mu, -1\}, 1\} \qquad \text{for $\mu > 0$}, \label{sgnm}\\
& \sign_0(r) = \lim_{\mu \to 0} \sign_\mu(r). \label{sgn0}
\end{align}

\begin{Prop}\label{P:uniqeps}
Under the assumptions of Theorem \ref{ex-Peps}, there exists at most one pair 
$(\thetaep,\chiep)$ such that 
\eqref{thep reg}-\eqref{chi-i.c.ep} hold.
\end{Prop}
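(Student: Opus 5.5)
Uniqueness will follow from an $L^1$-contraction argument, consistent with the functions $\sign_\mu$, $\sign_0$ just introduced. Let $(\theta_i,\chi_i)$, $i=1,2$, be two solutions with the same data, and set $\theta:=\theta_1-\theta_2$, $\chi:=\chi_1-\chi_2$. Subtracting the two energy equations \eqref{eneqep} gives
\[
\theta'+\chi'+A\theta=0 \qquad \text{in $V'$, a.e. in $\opint{0,T}$,}
\]
with $\theta(0)=\chi(0)=0$. Here $\chi\in H^1(0,T;H)$, hence $\theta'=-\chi'-A\theta\in L^2(0,T;V')$. Subtracting the two relaxation inclusions \eqref{phreleps in}, we find $\eps\chi'=\gamma(\theta_1+u_\eps)-\gamma(\theta_2+u_\eps)-(b_1-b_2)$ for some selections $b_i\in\beta(\chi_i)$ a.e. in $Q$. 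These selections lie in $L^2(Q)$, since $\gamma$ is sublinear and $\theta_i\in L^2(Q)$.

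First, test the energy equation with $\sign_\mu(\theta)\in L^2(0,T;V)$; this is admissible because $\sign_\mu$ is Lipschitz with $\sign_\mu(0)=0$, so the boundary condition on $\Gamma_0$ is preserved. The term $\fdual{A\theta}{\sign_\mu(\theta)}=\frac1\mu\int_{\{|\theta|<\mu\}}|\nabla\theta|^2$ is nonnegative, and the time derivative term is handled by the chain rule for $\int_\Omega \Phi_\mu(\theta)$ with $\Phi_\mu'=\sign_\mu$. This gives
\[
\int_\Omega\Phi_\mu(\theta(t))+\int_0^t\!\!\int_\Omega \chi'\sign_\mu(\theta)\le 0 .
\]

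Second, multiply the difference of the relaxation relations by $\sign_0(\chi)$ and integrate. The monotonicity of $\beta$ gives $(b_1-b_2)\sign_0(\chi)\ge0$, and $\frac{d}{dt}|\chi|=\chi'\sign_0(\chi)$ a.e. since $\chi\in H^1(0,T;H)$. We therefore obtain
\[
\eps\int_\Omega|\chi(t)|\le \int_0^t\!\!\int_\Omega\big(\gamma(\theta_1+u_\eps)-\gamma(\theta_2+u_\eps)\big)\sign_0(\chi).
\]
We then let $\mu\to0$ in the first inequality. The contribution $\chi'\sign_\mu(\theta)\to\chi'\sign_0(\theta)$ converges by dominated convergence, and $\Phi_\mu(\theta)\to|\theta|$.

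The main obstacle is combining the two inequalities: the cross term $\chi'\sign_0(\theta)$ has no sign, and the $\gamma$-term cannot be bounded by $|\theta|$ because $\gamma$ is only continuous, not Lipschitz. The plan is to use $\chi'=\eps^{-1}[\Gamma-(b_1-b_2)]$, where $\Gamma:=\gamma(\theta_1+u_\eps)-\gamma(\theta_2+u_\eps)$, together with the pointwise inequality
\[
[\Gamma-(b_1-b_2)]\bigl(\sign_0(\chi)-\sign_0(\theta)\bigr)\le0 \quad\text{a.e.}
\]
This is checked by cases. Where $\chi\theta>0$ the bracket vanishes. Where $\chi\ge0\ge\theta$, monotonicity of $\gamma$ gives $\Gamma\le0$ and monotonicity of $\beta$ gives $b_1-b_2\ge0$, so the first factor is $\le0$ while the second is $\ge0$. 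The case $\chi\le0\le\theta$ is symmetric, and the cases with zeros are handled by the convention $\sign_0(0)=0$ and the same sign arguments.

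Adding the two inequalities, with the second divided by $\eps$, then yields $\int_\Omega|\theta(t)|+\int_\Omega|\chi(t)|\le \eps^{-1}\int_0^t\!\int_\Omega[\Gamma-(b_1-b_2)](\sign_0(\chi)-\sign_0(\theta))\le0$. Hence $\theta=\chi=0$, which proves uniqueness. The delicate points I would verify carefully are the case analysis on the null sets $\{\theta=0\}$ and $\{\chi=0\}$, and the justification of the chain rule for $\Phi_\mu(\theta)$ when $\theta'$ is only in $L^2(0,T;V')$; for the latter I would use the standard lemma for $\theta\in L^2(0,T;V)\cap H^1(0,T;V')$.
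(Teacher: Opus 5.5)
Your strategy is the paper's. You test the difference of the energy balances with $\sign_\mu(\theta)$, let $\mu\to0$, and close with a pointwise monotonicity inequality. Your inequality $[\Gamma-(b_1-b_2)](\sign_0(\chi)-\sign_0(\theta))\le0$, after dividing by $\eps$, is exactly the paper's $\chi'\sign_0(\theta)\ge\partial_t|\chi|$. Handling the time derivative through the chain rule in $L^2(0,T;V)\cap H^1(0,T;V')$, instead of upgrading $\theta$ to $H^1(0,T;H)$ with the parabolic regularity lemma, is a harmless variation.

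However, your case analysis does not prove that inequality on the set $\{\chi=0,\ \theta\neq0\}$, which need not be negligible. There $\chi_1=\chi_2$, so $b_1$ and $b_2$ are two selections of the same set $\beta(\chi_1)$. For multivalued $\beta$ this set can be a nondegenerate interval (e.g. $\beta=\sign^{-1}$ at $\chi_1=1$), and monotonicity of $\beta$ gives no sign for $b_1-b_2$. For instance, if $\theta>0$ the integrand equals $(b_1-b_2)-\Gamma$ with $\Gamma\ge0$, and no sign argument prevents $b_1-b_2>\Gamma$. So the claim that the zero cases follow from $\sign_0(0)=0$ and the same sign arguments fails in precisely this case.

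The missing ingredient comes from the equation, not from monotonicity. Since $\chi\in H^1(0,T;H)$, for a.e. $x$ the function $\chi(x,\cdot)$ is absolutely continuous. The derivative of an absolutely continuous function vanishes a.e. on its zero set, so by Fubini $\chi'=0$ a.e. on $\{\chi=0\}$. Hence $\Gamma-(b_1-b_2)=\eps\chi'=0$ a.e. there, and the integrand vanishes. This is exactly how the paper treats points with $\chi(x,t)=0$, and with it your argument is complete.

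A minor point: your final combination uses the identity $\eps\int_\Omega|\chi(t)|=\int_0^t\int_\Omega[\Gamma-(b_1-b_2)]\sign_0(\chi)$, not the inequality in which you dropped the $b$-term. The identity holds, so this affects only the presentation.
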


\begin{proof}
Let $(\theta_i,\chi_i)$ such that 
\eqref{thep reg}--\eqref{chi-i.c.ep} hold with $(\thetaep,\chiep)$ replaced by $(\theta_i,\chi_i)$, $i=1,2$.
Let us set $\theta := \theta_1 - \theta_2$, $\chi := \chi_1 - \chi_2$ and observe that by virtue of Lemma 
\ref{L:regolarita' eq. calore} we have $\theta \in L^\infty(0,T;V) \cap H^1(0,T;H)$, $A\theta = -\Delta\theta \in L^2(Q)$, and 
\begin{alignat}{3}
  & \theta'(t) - \Delta\theta(t) = - \chi'(t) & \qquad & \text{a.e. in $\Omega$}, \label{diff enbal}\\
  & \theta(0) = 0 & \qquad & \text{a.e. in $\Omega$}, \label{th ic-i}
\end{alignat}
for a.e. $t \in \opint{0,T}$. We also have that there exists $\xi_i \in L^2(Q)$ such that 
\begin{alignat}{3}
  & \eps\chi_i'(x,t) + \xi_i(x,t) =
  \gamma\big(\theta_i(x,t) + u_\eps(x,t)\big)
  & \qquad & \text{for a.e. $x \in \Omega$}, \label{phreleps-i} \\
  & \xi_i(x,t) \in \beta\big(\chi_i(x,t)\big) &\qquad & \text{for a.e. $x \in \Omega$}, \\
  &  \chi_i(0) = \chi_{0\eps} & \qquad & \text{a.e. in $\Omega$}, \label{chi-i.c.ep-i}
\end{alignat}
for a.e. $t \in \opint{0,T}$, $i=1,2$. Now we fix $t \in \opint{0,T}$ such that \eqref{diff enbal}, \eqref{phreleps-i} hold and such that $(\partial|\theta|/\partial t)|(x,t)$ and$(\partial|\chi|/\partial t)(x,t)$ exist for a.e. $x \in \Omega$: the set of such $t$'s has full measure in $\opint{0,T}$ by virtue of \cite[Theorem 2.1.11, p. 48]{Zie89}.
If $\sign_\mu : \erre \function \erre$ is defined by \eqref{sgnm} for $\mu > 0$, then
$\sign_\mu(\theta(t)) \in V$ and 
\[
  -\intom \Delta\theta(t)\sign_{\mu}(\theta(t)) = 
  \intom |\nabla\theta(t)|^2\sign_{\mu}'(\theta(t)) \ge 0 \qquad \text{for a.e. $t \in \opint{0,T}$},
\]
therefore if we multiply \eqref{diff enbal} by $\sign_{\mu}(\theta(t))$ and integrate over $\Omega$, we obtain
\begin{equation}\label{est_sign-i}
  \intom \theta'(t)\sign_{\mu}(\theta(t)) +
  \intom \chi'(t)\sign_{\mu}(\theta(t)) \le 0.
\end{equation}
Taking the limit as $\mu \searrow 0$ in \eqref{est_sign-i} we infer that 
\begin{equation}\label{est_sign-i2}
  \intom \theta'(t)\sign_{0}(\theta(t)) +
  \intom \chi'(t)\sign_{0}(\theta(t)) \le 0,
\end{equation}
where $\sign_0$ is defined by \eqref{sgn0}.

Now fix $x \in \Omega$ such that $(\partial\chi/\partial t)(x,t)$ and $(\partial|\chi|/\partial t)(x,t)$ exist:
if $\chi(x,t) \neq 0$, then using \eqref{phreleps-i} and the monotonicity of $\beta$ and $\gamma$, we infer that
\begin{align}
  & \chi'(x,t)\sign_{0}(\theta(x,t)) \notag \\
  & = \frac{1}{\eps}\Big[\big(\gamma(\theta_1(x,t)) - \gamma(\theta_2(x,t)\big) - \big(\xi_1(x,t) - \xi_2(x,t)\big) \Big]  
           \sign_{0}(\theta_1(x,t) - \theta_2(x,t)) \notag \\
  & \ge \frac{1}{\eps}\Big[\big(\gamma(\theta_1(x,t)) - \gamma(\theta_2(x,t)\big) - \big(\xi_1(x,t) - \xi_2(x,t)\big) \Big]  \sign_{0}(\chi_1(x,t) - \chi_2(x,t)) \notag \\
  & = \chi'(x,t)\sign_{0}(\chi(x,t)) = \frac{\partial|\chi|}{\partial t}(x,t);\label{monarg1}
\end{align}
if instead $\chi(x,t) = 0$, then it is easy to check that $(\partial|\chi|/\partial t)(x,t) = (\partial\chi/\partial t)(x,t) =  0$ and we have
\begin{equation}
  \chi'(x,t)\sign_{0}(\theta(x,t)) = 0 = \frac{\partial|\chi|}{\partial t}(x,t);
\end{equation}
thus in every case we have
\begin{equation}
  \chi'(x,t)\sign_{0}(\theta(x,t)) \ge \frac{\partial|\chi|}{\partial t} (x,t).\label{monarg3}
\end{equation}
From \eqref{est_sign-i2} and \eqref{monarg3} we therefore get 
\begin{equation}
  \frac{\de}{\de t} \intom(|\theta(t)| + |\chi(t)|) \le 0 \qquad \text{for a.e. $t \in \opint{0,T}$},
\end{equation}
and integrating in time, thanks to \eqref{th ic-i}, \eqref{chi-i.c.ep-i}, we obtain
\begin{equation}
  \intom(|\theta(t)| +|\chi(t)|) \le 0 \qquad \forall t \in \clint{0,T}.
\end{equation}
Therefore
$
  \norm{\theta}{L^\infty(0,T;L^1(\Omega))} +   \norm{\chi}{L^\infty(0,T;L^1(\Omega))} \le 0$,
so that $\theta_1 = \theta_2$, $\chi_1 = \chi_2$ a.e. in $Q$ and we are done.
\end{proof}

We now turn to the question about existence of solutions. We first approximate problem \eqref{eneqep}-\eqref{chi-i.c.ep}.

\begin{Lem}\label{L:lambda approx}
Let \emph{(A1)-(A3}) of Assumptions \ref{H} be satisfied.
Fix $\eps > 0$, $\lambda > 0$, and let $\gamma_\lambda := (\lambda + \gamma^{-1})^{-1} : \erre \function \erre$ be the $\lambda$-Yosida approximation of $\gamma$ (cf. \cite[Section II.4, p. 28]{Bre73}).
Assume that $f_\eps \in L^1(0,T;H) + L^2(0,T;V')$, $u_\eps \in L^2(Q)$, $\thetazep \in L^2(\Omega)$, $\chizep \in L^2(\Omega)$, and 
$\chizep(x) \in \overline{D(\beta)}$ for a.e. $x \in \Omega$. Then there exist a unique pair $(\theta_\lambda, \chi_\lambda) \in [L^2(0,T;V) \cap C(\clint{0,T};H)] \times H^1(0,T;H)$ such that 
$\theta'_\lambda \in L^1(0,T;H) + L^2(0,T;V')$ and 
\begin{alignat}{3}
  & \theta_\lambda' + \chi_\lambda' + A\theta_\lambda = f_\eps 
  	& \qquad & \text{in $V'$, a.e. in $\opint{0,T}$}, \label{eneqmu} \\
  & \eps\chi_\lambda' + \beta\big(\chi_\lambda\big) \ni \gamma_\lambda\big(\theta_\lambda + u_\eps\big) 
  	& \qquad & \text{a.e. in $Q$}, \label{phrelmu} \\
  &  \theta_\lambda(0) = \thetazep, \quad \chi_\lambda(0) = \chizep 
  	& \qquad & \text{a.e. in $\Omega$}, \label{ch-i.c.mu}
\end{alignat}
\end{Lem}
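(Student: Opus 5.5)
Since $\gamma$ is continuous and nondecreasing, it is maximal monotone, so $\gamma_\lambda = (\lambda + \gamma^{-1})^{-1}$ is single-valued, nondecreasing and Lipschitz with constant $1/\lambda$. The plan is a fixed point in the temperature variable. Given $\bar\theta \in L^2(Q)$, first solve the ODE inclusion
\[
\eps\chi' + \beta(\chi) \ni \gamma_\lambda(\bar\theta + u_\eps), \qquad \chi(0) = \chizep .
\]
This is an abstract Cauchy problem in $H$ for the maximal monotone operator induced by $\beta$ on $L^2(\Omega)$, with a right-hand side $\gamma_\lambda(\bar\theta + u_\eps) \in L^2(0,T;H)$. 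The right-hand side lies in $L^2$ because $|\gamma_\lambda(r)| \le |\gamma_\lambda(0)| + |r|/\lambda$. The initial datum satisfies $\chizep \in \overline{D(\beta)}$ a.e. By Brezis' theory \cite{Bre73}, the subdifferential case (every maximal monotone graph on $\erre$ is a subdifferential of a convex l.s.c. function $j$), there is a unique $\chi \in H^1(0,T;H)$ solving it a.e. in $Q$. The $H^1$ regularity holds even though $\chizep$ only lies in $\overline{D(\partial\Phi)}$: for subdifferentials, $\sqrt{t}\,\chi' \in L^2$ in general, and the $L^2$ forcing gives $\chi \in H^1$ when the initial datum is in $D(\Phi)$. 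If that fails, I would approximate $\chizep$ and argue in the fixed-point scheme that only $\chi' \in L^2(\delta,T;H)$ estimates are needed. This point, whether $\chi' \in L^2(0,T;H)$ up to $t=0$ for data merely in $\overline{D(\beta)}$, is the delicate regularity issue. Since $j$ may be $+\infty$ at $\chizep$, the fallback is to use the time-Lipschitz-type estimate from the Lipschitz forcing: differentiate the contraction estimate, $\|\chi(t+h)-\chi(t)\|$ controlled by $\|\chi(h)-\chizep\|$ plus forcing increments. Then I would define $\Phi(\bar\theta) := \theta$, the solution of the linear parabolic problem
\[
\theta' + A\theta = f_\eps - \chi', \qquad \theta(0) = \thetazep,
\]
which exists uniquely in $L^2(0,T;V) \cap C([0,T];H)$ with $\theta' \in L^1(0,T;H) + L^2(0,T;V')$, by standard Lions theory applied to $f_\eps - \chi' \in L^1(0,T;H) + L^2(0,T;V')$.

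Next I would prove that $\Phi$ is a contraction on $L^2(0,t_0;H)$ for small $t_0$, or on all of $[0,T]$ in a weighted norm $e^{-kt}$. Take $\bar\theta_1, \bar\theta_2$ with associated $\chi_i, \theta_i$. Testing the difference of the $\chi$-equations by $\chi_1 - \chi_2$ and using the monotonicity of $\beta$ and the Lipschitz bound on $\gamma_\lambda$ gives
\[
\|(\chi_1-\chi_2)(t)\|_H \le \frac{1}{\eps\lambda}\int_0^t \|\bar\theta_1 - \bar\theta_2\|_H .
\]
To control $\chi_1' - \chi_2'$ in $L^2(0,t;H)$, which is what drives the heat equation, I would test instead by $(\chi_1-\chi_2)'$; this is valid for subdifferentials via the chain rule $\frac{d}{dt}\int j(\chi) = (\xi,\chi')$. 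It does not directly give a difference estimate, though. So the better route is to test the heat-equation difference by $\theta_1 - \theta_2$ and the term $(\chi_1-\chi_2)'$ by writing $\eps(\chi_1-\chi_2)' = \gamma_\lambda(\cdot)_1 - \gamma_\lambda(\cdot)_2 - (\xi_1 - \xi_2)$. The $\xi$ part is problematic in sign, however. The more robust alternative is the $L^1$ contraction used in Proposition \ref{P:uniqeps}, but a Banach fixed point needs a norm. I would therefore instead use Schauder's theorem. The map $\Phi$ is continuous and compact: the $\chi'$ bound in $L^2(0,T;H)$, uniform on bounded sets of $\bar\theta$ since the forcing is linear in $\bar\theta$, gives $\theta$ bounded in $L^2(0,T;V) \cap H^1(0,T;V')$ (plus $L^1$ parts), which is compact in $L^2(Q)$ by Aubin--Lions. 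An a priori bound makes a large ball invariant on a short interval. Linear growth of $\gamma_\lambda$ plus Gronwall then extends the solution to $[0,T]$.

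Uniqueness follows exactly as in Proposition \ref{P:uniqeps}, with $\gamma$ replaced by the monotone function $\gamma_\lambda$. The main obstacle is securing $\chi \in H^1(0,T;H)$ and the corresponding uniform bound for initial data only in $\overline{D(\beta)}$. I would try first the Brezis estimate $\|\chi'\|_{L^2(0,T;H)}$, bounded by $\|F\|_{L^2}$ plus a term in $\int j(\chizep)$, after approximating $\chizep$.
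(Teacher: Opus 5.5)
\textbf{The missing idea.} Your contraction attempt stalled because you tried to control $\chitilde'$ in the heat equation. The paper never needs $\chitilde'$: it first integrates the difference of the energy balances in time. Since both iterates share $\thetazep$, $\chizep$ and $f_\eps$, this gives $\thetatilde(t)+\chitilde(t)+A\widehat{\thetatilde}(t)=0$ in $V'$. Testing by $\thetatilde=\widehat{\thetatilde}\,'$ yields
\[
\normaq{\thetatilde}{L^2(0,t;H)}+\tfrac12\normaq{\nabla\widehat{\thetatilde}(t)}{H^d}=-\integr(\chitilde(s),\thetatilde(s))_H\,\de s,
\quad\text{hence}\quad
\normaq{\thetatilde}{L^2(0,t;H)}\le\integr\normaq{\chitilde(s)}{H}\de s .
\]
Your own (correct) bound $\norma{\chitilde(t)}{H}\le(\eps\lambda)^{-1}\integr\norma{\Thetatilde(s)}{H}\de s$ then gives $\normaq{\thetatilde}{L^2(0,t;H)}\le C\integr\normaq{\Thetatilde}{L^2(0,s;H)}\de s$. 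So $\mathcal{S}_\lambda^n$ is a strict contraction on $L^2(0,T;H)$ for $n$ large. This is the paper's proof, and it gives existence and uniqueness at once. It needs no compactness, no uniform bound on $\chi'$, and no appeal to Proposition~\ref{P:uniqeps}.

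\textbf{Your Schauder route.} It could be made to work, but as written it is incomplete. Continuity of your map in $L^2(Q)$ is asserted, not proved. Strong convergence $\chi_n\to\chi$ does not by itself control the solutions driven by $-\chi_n'$. You would need either the integrated estimate above or a compactness-plus-uniqueness identification based on uniform $L^2$ bounds for $\chi_n'$. The invariant ball and the continuation step also rest on that same uniform bound.

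\textbf{The $H^1(0,T;H)$ issue.} Your worry here is well founded, and the paper does not resolve it either. It only cites \cite[Theorems 3.4, 3.6]{Bre73}. These give $\chi'\in L^2(0,T;H)$ only when $\intom j(\chizep)<\infty$, where $\partial j=\beta$; otherwise they give just $\sqrt t\,\chi'\in L^2(0,T;H)$.

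The claim can actually fail. Take $\beta(r)=\tan(\pi r/2)$ on $\opint{-1,1}$ and $\chizep=1$ on a set $E$ of positive measure. Suppose an $H^1$ solution existed, and fix a.e. $x\in E$.
\begin{itemize}
\item Set $w:=1-\chi(x,\cdot)$. Then $w(0)=0$, $w>0$ a.e., and $w(t)\le\sqrt t\,\norma{w'}{L^2(0,t)}$.
\item For small $w$, $\tan(\pi\chi/2)=\cot(\pi w/2)\ge 1/(\pi w)$, so $\int_0^\delta w^{-2}\,\de t\ge \norma{w'}{L^2(0,\delta)}^{-2}\int_0^\delta t^{-1}\,\de t=\infty$.
\item This contradicts $\tan(\pi\chi/2)=\gamma_\lambda(\theta_\lambda+u_\eps)-\eps\chi'\in L^2(0,T)$, which holds for a.e. $x$ by Fubini.
\end{itemize}
So neither approximating $\chizep$ nor time-difference estimates can rescue the regularity; they cannot produce something the true solution does not have. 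The honest fix is to add the hypothesis $j(\chizep)\in L^1(\Omega)$. Under it, both the paper's argument and yours go through.
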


\begin{proof}
Fix $\Theta \in L^2(0,T;H)$ and recall that $\gamma_\lambda$ is $1/\lambda$-Lipschitz continuous over $\erre$. Therefore by \cite[Theorem 3.4, Theorem 3.6, Example 2.3.3]{Bre73} (applied with the maximal monotone operator $\mathcal{B}$ defined 
$
  \mathcal{B}(v) :=  \{w \in L^2(\Omega)\ :\ w(x) \in \beta(v(x)) \quad \text{for a.e. $x \in \Omega$}\}
$)
there exists a unique $\chi \in H^1(0,T;H)$ such that 
\begin{alignat}{3}
  & \eps\chi' + \beta(\chi) \ni \gamma_\lambda(\Theta + u_\eps) & \qquad & \text{a.e. in $Q$}, \label{phrel-mu} \\
  & \chi(0) = \chizep & \qquad & \text{a.e. in $\Omega$}, \label{ch-i.c.-mu}
\end{alignat}
thus (cf., e.g., \cite[Theorem 3.2]{{Bai67}}) there exists a unique $\theta \in L^2(0,T;V) \cap C(\clint{0,T};H)$ such that $\theta' \in L^1(0,T;H) + L^2(0,T;V')$ and 
\begin{alignat}{3}
  & \theta' + A\theta = f_\eps - \chi' & \qquad & \text{in $V'$, a.e. in $\opint{0,T}$}, \label{eneq-mu} \\
  & \theta(0) = \thetazep & \qquad & \textrm{a.e. in $\Omega$}. \label{th-i.c.-mu}
\end{alignat}
In this way we have defined a mapping $\mathcal{S}_\lambda :  L^2(0,T;H) \function L^2(0,T;H)$ associating
$\Theta$ with the unique $\theta$ satisfying \eqref{phrel-mu}, \eqref{ch-i.c.-mu}, \eqref{eneq-mu}, \eqref{th-i.c.-mu}. For every $\Theta_k \in L^2(0,T;H)$, $k = 1, 2$, set $\theta_k := \mathcal{S}_\lambda(\Theta_k)$, 
$\Thetatilde := \Theta_1 - \Theta_2$, and $\thetatilde := \theta_1 - \theta_2$. Moreover set 
$\chitilde := \chi_1 - \chi_2$ where $\chi_k$ is the unique function satisfying \eqref{phrel-mu}, \eqref{ch-i.c.-mu} with $\Theta$ replaced by $\Theta_k$, $k = 1, 2$.
Let us fix $t \in \cldxint{0,T}$.
Let us take the difference of the approximated phase relaxations \eqref{phrel-mu} with $\Theta$ and $\chi$ replaced respectively by $\Theta_k$ and $\chi_k$, , $k = 1, 2$, multiply it by $\chitilde$, and integrate over $\Omega\times\opint{0,t}$. Thanks to 
\eqref{ch-i.c.-mu}, to the maximal monotonicity of $\beta$, to the Lipschitz continuity of $\gamma_\lambda$, and to the Young inequality, we obtain
\begin{align}
  \frac{\eps}{2}\normaq{\chitilde(t)}{H} \le  
  \frac{1}{2\lambda^2}\normaq{\Thetatilde}{L^2(0,t;H)} + \frac{1}{2}\integr\normaq{\chitilde(s)}{H} \de s,
  \label{stcontr1}
\end{align}
therefore by the Gronwall lemma we infer that there exists a constant $C > 0$ independent of $\theta$ (and depending only on $T$, $\eps$ and $\lambda$), such that 
\begin{equation}\label{stcontr1-2}
  \normaq{\chitilde(t)}{H} \le C\normaq{\Thetatilde}{L^2(0,t;H)}.
\end{equation}
Now let us take the difference of the energy balance equations \eqref{eneq-mu} with $\theta$ and $\chi$ replaced respectively by $\theta_k$ and $\chi_k$, $k = 1, 2$. Integrate in time the resulting equation, and multiply it by $\thetatilde$. Integrating over 
$\opint{0,t} \times \Omega$ and using the Young inequality, thanks to \eqref{th-i.c.-mu} we infer that
\begin{align}
  \frac{1}{2}\normaq{\thetatilde}{L^2(0,t;H)} 
  \le 
   \frac{1}{2}\integr\normaq{\chitilde(s)}{H} \de s, \label{stcontr2}
\end{align}
therefore using \eqref{stcontr1-2} we infer that
\begin{equation}
  \normaq{\thetatilde}{L^2(0,t;H)} \le 
   C\integr\normaq{\Thetatilde}{L^2(0,s;H)} \de s, \label{stcontr3}
\end{equation}
hence the mapping $\mathcal{S}_\lambda^n$ is a strict contraction for $n$ sufficiently large, and we can infer 
that $\mathcal{S}_\lambda$ admits a uni\-que fixed point $\theta_\lambda$. We conclude by observing that $(\theta_\lambda, \chi_\lambda)$ is a solution of \eqref{eneqmu}--\eqref{ch-i.c.mu} if and only if $\theta_\lambda$ is a fixed point of $\mathcal{S}_\lambda$.
\end{proof}

\begin{proof}[Proof of Theorem \ref{ex-Peps}]
By Proposition \ref{P:uniqeps} we only have to prove existence of solutions. 
For simplicity we omit the subscripts $\eps$. 
For any $\lambda > 0$ let $\gamma_\lambda := (\lambda + \gamma^{-1})^{-1} : \erre \function \erre$ be the $\lambda$-Yosida approximation of $\gamma$ (cf. \cite[Section II.4, p.28]{Bre73}). By Lemma \ref{L:lambda approx} there exists a unique pair 
$(\theta_\lambda, \chi_\lambda) \in [L^2(0,T;V) \cap C(\clint{0,T};H)] \times H^1(0,T;H)$ such that 
$\theta'_\lambda \in L^1(0,T;H) + L^2(0,T;V')$ and 
\eqref{eneqmu}--\eqref{ch-i.c.mu} hold. Let us test the energy balance equation \eqref{eneqmu} by $\theta_\lambda$ and integrate over $\opint{0,t}\times \Omega$. Recalling that $f = f_{V'} + f_H$ with $f_{V'} \in L^2(0,T;V')$ and $f_{H} \in L^1(0,T;H)$, and exploiting the Young inequality, we find 
\begin{align}
  & \frac{1}{2}\normaq{\theta_\lambda(t)}{H} + 
      \frac{1}{2}\normaq{\nabla\theta_\lambda}{L^2(0,t;H^d)} \notag \\
  &  \le 
      \frac{1}{2}\normaq{\thetazep}{H} + \frac{1}{2}\normaq{f_{V'}}{L^2(0,T;V')} \notag \\
  & \phantom{=\ } + 
      \integr\norma{f_{H}(s)}{H}\norma{\theta_\lambda(s)}{H}\de s + 
      \frac{\eps+2}{2\eps}\integr\normaq{\theta_\lambda(s)}{H}\de s +
      \frac{\eps}{4}\normaq{\chi'_\lambda}{L^2(0,t;H)}. \label{en_est_lamu}
\end{align}
Let $\xi_\lambda \in L^2(0,T;H)$ be such that
\begin{alignat}{3}
  & \xi_\lambda \in \beta(\chi_\lambda) & \quad & \text{a.e. in $Q$}, \label{phrel-xil0}\\
  & \eps\chi_\lambda' + \xi_\lambda = \gamma_\lambda(\theta_\lambda + u)& \quad & \text{a.e. in $Q$}.
  \label{phrel-xil}
\end{alignat}
Let us recall that $|\gamma_\lambda(r)| \le |\gamma(r)|$ 
for every $r \in \erre$ (cf. e.g., \cite[Proposition 2.6]{Bre73})  so that from  \eqref{gamma sublin} we get that
\begin{equation}\label{gammal sl}
  |\gamma_\lambda(r)| \le C_\gamma(1 + |r|) \qquad \forall r \in \erre.
\end{equation} 
Therefore if we multiply the approximated phase relaxation \eqref{phrel-xil} 
by $\chi'_\lambda$ and integrate over 
$\opint{0,t}\times\Omega$, taking some $r_0 \in D(\beta)$ and recalling definition \eqref{zeta^o},
we get
\begin{align}
  & \eps\normaq{\chi'_\lambda}{L^2(0,t;H)} + 
      \int_0^t\intom (\xi_\lambda(s)) - \beta^o(r_0))\chi'_\lambda(s) \de s \notag \\
  & =  -\int_0^t\intom \beta^o(r_0)\chi'_\lambda(s) \de s +
          \integr\intom \gamma_\lambda(\theta_\lambda(s) + u(s))\chi'_\lambda(s) \de s \notag \\
  & \le \int_0^t\intom |\beta^o(r_0)||\chi'_\lambda(s)| \de s + 
          C_\gamma\integr\intom(1 + |\theta_\lambda(s) + u(s)|)|\chi'_\lambda(s)|\de s \notag \\
  & \le \frac{\eps}{4}\normaq{\chi'_\lambda}{L^2(0,t;H)} +
          \frac{2}{\eps}t|\Omega||\beta^0(r_0)|^2 + 
          \frac{4C_\gamma^2}{\eps}\integr\intom(1 + 2|\theta_\lambda(s)|^2 + 2|u(s)|^2)\de s.  
          \label{ph_est_lamu} 
\end{align}
Let $\phi_\beta : \erre \function \cldxint{0,\infty}$ be the convex lower semicontinuous function such that $\partial\phi_\beta = \beta$ and $\phi_{\beta_\mu}(r_0) = 0$ (cf., e.g., \cite{Bre73} for the definition of the subdifferential $\partial\phi_\beta$), so that 
\[
  \phi_{\beta}(\chi_\lambda(t)) - \beta^o(r_0)(\xi_\lambda(t)-r_0) \ge 0 \quad \text{for a.e. $t\in [0,T]$,}
\]
then by virtue of \cite[Lemma 3.3, p. 73]{Bre73}
\[
  \frac{\de\ }{\de t}[\phi_{\beta}(\chi_\lambda(t)) - \beta^o(r_0)(\chi_\lambda(t)-r_0)] =
  [\xi_\lambda(t) - \beta^o(r_0)]\chi_\lambda'(t),
\]
thus
\[
   \intom \phi_{\beta}(\chi_\lambda(t)) - \beta^o(r_0)(\xi_\lambda(t)-r_0) =  \int_0^t\intom (\xi_\lambda(s)) - \beta^o(r_0))\chi'_\lambda(s) \de s \ge 0
\]
for a.e. $t \in \clint{0,T}$, and from \eqref{ph_est_lamu} we infer that 
\begin{align}
  \frac{3\eps}{4}\normaq{\chi_\lambda'}{L^2(0,t;H)} 
       \le \frac{2t|\Omega|(|\beta^o(r_0)|^2+2C_\gamma^2)}{\eps} +
           \frac{8C_\gamma^2}{\eps}\normaq{u}{L^2(0,t;H)} +
           \frac{8C_\gamma^2}{\eps}\integr\normaq{\theta_\lambda(s)}{H}\de s.
           \label{ph_est_lamu2}
\end{align}
Thus adding \eqref{en_est_lamu} to \eqref{ph_est_lamu2} we deduce that
\begin{align}
  & \frac{\eps}{2}\normaq{\chi_\lambda'}{L^2(0,t;H)} +
      \frac{1}{2}\normaq{\theta_\lambda(t)}{H} + 
      \frac{1}{2}\normaq{\nabla\theta_\lambda}{L^2(0,t;H^d)} \notag \\
&  \le C_1(\eps)\left(
       1 + \integr \norma{f_{H}(s)}{H}\norma{\theta_\lambda(s)}{H}\de s + 
      \integr\normaq{\theta_\lambda(s)}{H}\de s\right), \label{ph_est_lamu3}
\end{align}
where $C_1(\eps) > 0$ is a constant independent of $\lambda$, 
but depending only on $\eps$, $T$, $|\Omega|$, $C_\gamma$, $\beta, r_0$, $u$.
Using a generalized version of the Gronwall lemma (see, e.g., \cite[Theorem 2.1]{Bai67}), by \eqref{ph_est_lamu3} and by a comparison in \eqref{eneqmu}, we find a constant $C(\eps) > 0$ independent of $\lambda$, but depending only on $\eps$, $T$, $|\Omega|$, $C_\gamma$, $\beta$, $u$, such that 
\begin{equation}\label{bound for lamu}
   \norma{\theta_\lambda}{L^2(0,T;V)} + 
   \norma{\theta_\lambda'}{L^2(0,T;V')} +
    \norma{A\theta_\lambda}{L^2(0,T;V')} +
    \norma{\chi_\lambda}{H^1(0,T;H)} \le C(\eps).
\end{equation}
Therefore there exists $\chi \in H^1(0,T;H)$ and 
$\theta \in L^1(0,T;V)\cap L^2(0,T;H)$ such that 
$\theta' \in L^2(0,T;V')$ and, at least for a subsequence, the following weak convergences hold:
\begin{alignat}{3}
  & \chi_\lambda \convergedeb \chi & \qquad &  \text{in $H^1(0,T;H)$},
\label{deb-chilambda} \\
  & \theta_\lambda \convergedeb \theta & \qquad & \text{in $L^2(0,T;V)$}, \label{thetla->thet-1}\\
  & \theta_\lambda' \convergedeb \theta', \ 
  A\theta_\lambda \convergedeb A\theta
  & \qquad & \text{in $L^2(0,T;V')$}. \label{thetla->thet-2}
\end{alignat}
By the Aubin-Lions compactness lemma 
(\cite[Theorem 5.1, p. 58]{Lio69}),
\begin{equation}\label{thetla->thet}
  \theta_\lambda \to \theta \qquad \text{in $L^2(0,T;H)$},
\end{equation}
thus taking possibly a further subsequence 
\begin{equation}\label{thl -> qo}
  \theta_\lambda(x,t) \to \theta(x,t) \qquad \text{for a.e. $(x,t) \in Q$}.
\end{equation}
We claim that
\begin{equation}
  \gamma_\lambda(\theta_\lambda(x,t) + u(x,t)) \to \gamma(\theta(x,t) + u(x,t)) \qquad \text{for a.e. $(x,t) \in Q$}. \label{etala --> eta}
\end{equation}
Indeed, fix $(x,t)$ such that \eqref{thl -> qo} holds, by \eqref{gammal sl}, passing possibly to a further subsequence, we have that there exists $\eta(x,t) := \lim_{\lambda \searrow 0} \gamma_\lambda(\theta_\lambda(x,t) + u(x,t))$; moreover, using e.g. \cite[Proposition 2.6-(iii), p. 28]{Bre73}, we have $\lim_{\lambda \searrow 0}\gamma_\lambda(r) = \gamma^o(r) = \gamma(r)$ for every $r \in \erre$, thus 
\begin{align}
  & [\eta(x,t) - \gamma(r)][(\theta(x,t) + u(x,t)) - r] \notag \\
  & \lim_{\lambda \searrow 0} [\gamma_\lambda(\theta_\lambda(x,t) + u(x,t)) - \gamma_\lambda(r)][(\theta_\lambda(x,t) + u(x,t)) - r] \ge 0, \label{lim gamla(thla)}
\end{align}
for every $r \in \erre$, hence \eqref{lim gamla(thla)} and the monotonicity of $\gamma$ imply that $\eta(x,t) = \gamma(\theta(x,t) + u(x,t))$. Claim \eqref{etala --> eta} is therefore proved, and by the dominated convergence theorem we also have 
\begin{equation}
  \gamma_\lambda(\theta_\lambda + u) \to \gamma(\theta + u) \qquad \text{in $L^2(0,T;H)$}. \label{etal --> eta0}
\end{equation}

We take the limit in the energy balance equation \eqref{en_est_lamu} and find that
\begin{equation}\label{en bal limit}
  \theta' + \chi' + A\theta = f \qquad \text{in $V'$, a.e. in $\opint{0,T}$}.
\end{equation}
therefore from \eqref{bound for lamu} we infer that there exist $\eta \in L^2(0,T;H)$ such that, at least for a subsequence,
\begin{equation}
  \gamma_\lambda(\theta_\lambda + u) \to \gamma(\theta + u) \qquad \text{in $L^2(0,T;H)$}, \label{etal --> eta}
\end{equation}
Now let $\xi_\lambda \in L^2(0,T;H)$ be such that
\eqref{phrel-xil0}-\eqref{phrel-xil} hold.
A comparison in \eqref{phrel-xil}, together with \eqref{bound for lamu}, \eqref{etal --> eta}, yields a constant $C_1 > 0$ independent of $\lambda$ such that
\begin{equation}
   \norma{\xi_\lambda}{L^2(0,T;H)} \le C_1.
\end{equation}
Thus there exists $\xi \in L^2(0,T;H)$ such that, at least for a subsequence, we have the following weak convergence:
\begin{equation}
  \xi_\lambda \convergedeb \xi \qquad \text{in $L^2(0,T;H)$}, \label{xil-->xi}
\end{equation}
and taking the limit in \eqref{phrel-xil} we infer that 
\begin{equation}
  \eps\chi' + \xi = \gamma(\theta + u) \qquad \text{a.e. in $Q$}. \label{pheqlalim}
\end{equation}
We are left to pass to the limit in the nonlinear operator $\beta$. First observe that from \eqref{deb-chilambda} it is easy to check that
\begin{equation}
\chi_\lambda(t) \convergedeb \chi(t) \qquad \text{in $H,\quad  \forall t \in \clint{0,T}$}. \label{chil(t)->deb}
\end{equation}
Thus exploiting \eqref{phrel-xil}, \eqref{etal --> eta} \eqref{deb-chilambda}, \eqref {chil(t)->deb} and  \eqref{pheqlalim} we infer that
\begin{align}
  \limsup_{\lambda \searrow 0}(\xi_\lambda,\chi_\lambda)_{L^2(Q)}
 & = \limsup_{\lambda \searrow 0} (\gamma_\lambda(\theta_\lambda + u) - \eps\chi_\lambda',\chi_\lambda)_{L^2(Q)} \notag \\
 & = \limsup_{\lambda \searrow 0} \left[(\gamma_\lambda(\theta_\lambda + u),\chi_\lambda)_{L^2(Q)} - \frac{\eps}{2}\norma{\chi_\lambda(T)}{H} + \frac{\eps}{2}\norma{\chizep}{H}  \right] \notag \\
 & \le (\gamma(\theta + u),\chi)_{L^2(Q)} - \liminf_{\lambda \searrow 0}\frac{\eps}{2}\norma{\chi_\lambda(T)}{H} +
 \frac{\eps}{2}\norma{\chizep}{H}  \notag \\
 & \le (\gamma(\theta + u),\chi)_{L^2(Q)} - \frac{\eps}{2}\norma{\chi(T)}{H} + 
 \frac{\eps}{2}\norma{\chizep}{H}  \notag \\
 & = (\gamma(\theta + u) - \eps\chi',\chi)_{L^2(Q)} = 
 (\xi,\chi)_{L^2(Q)},
\end{align}
thus by \cite[Proposition 2.5, p. 27]{Bre73} we infer that $\xi \in \beta(\chi)$ a.e.\ in $Q$
and we are done.
\end{proof}


\section{A priori estimates}\label{S:priorest}

\subsection{Uniform boundedness of the phase}

In this section we prove that the phase variable $\chi_\eps$ of the relaxed system is uniformly bounded if it is bounded at the initial time. Before we need the following lemma.

\begin{Lem}\label{L:inf beta-gamma--inf alfa}
Let \emph{(A2)-(A5)} of Assumptions \ref{H} be satisfied, and let $\theta\in\erre$, $\chi\in D(\beta)$. Then
\begin{align}
  & \xi > \sup\alpha(\theta) \quad \Longrightarrow \quad \sup[\gamma(\theta) - \beta(\chi)] < 0,
  \label{lemmino1} \\
    & \chi < \inf\alpha(\theta) \quad \Longrightarrow \quad \inf[\gamma(\theta) - \beta(\chi)] > 0.
  \label{lemmino2}
\end{align}
\end{Lem}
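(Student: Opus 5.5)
Note first that in \eqref{lemmino1} the hypothesis is evidently meant to read $\chi > \sup\alpha(\theta)$; the letter $\xi$ is a typo, consistent with \eqref{lemmino2}. Both implications rest only on the monotonicity of $\beta$ and the compatibility condition \eqref{Hyp gamma-beta-alfa}. By symmetry I describe \eqref{lemmino1}; \eqref{lemmino2} follows by reversing all inequalities.

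\textbf{Case $\alpha(\theta)\neq\vuoto$.} Since $\alpha$ is maximal monotone, $\alpha(\theta)$ is a closed interval, and by (A4) it is bounded. Hence $s^* := \sup\alpha(\theta) \in \alpha(\theta)$. By \eqref{Hyp gamma-beta-alfa} we get $\gamma(\theta)\in\beta(s^*)$; moreover $s^*\in D(\beta)$ by \eqref{Hyp gamma-beta-alfa0}.

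Let $\chi > s^*$ and $b\in\beta(\chi)$. Monotonicity of $\beta$ gives $b \ge \gamma(\theta)$. Equality $b=\gamma(\theta)$ would mean $\gamma(\theta)\in\beta(\chi)$, hence $\chi\in\alpha(\theta)$ by \eqref{Hyp gamma-beta-alfa}, so $\chi\le s^*$, a contradiction. Therefore $b>\gamma(\theta)$ for every $b\in\beta(\chi)$.

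\textbf{From pointwise strict inequality to a strict bound on the supremum.} We must show $\sup[\gamma(\theta)-\beta(\chi)] = \gamma(\theta)-\inf\beta(\chi) <0$. Since $\beta(\chi)$ is closed and, being $\chi\in D(\beta)$, nonempty, $\inf\beta(\chi)$ is either attained or equal to $-\infty$. The value $-\infty$ is excluded, because every element of $\beta(\chi)$ is $\ge\gamma(\theta)$, so $\beta(\chi)$ is bounded below. Hence $\inf\beta(\chi)=\min\beta(\chi)>\gamma(\theta)$, which is the claim.

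\textbf{Case $\alpha(\theta)=\vuoto$.} Here the conventions give $\sup\alpha(\theta)=-\infty$ and $\inf\alpha(\theta)=+\infty$, so the hypotheses hold for every $\chi$. I would show that then $\gamma(\theta)\notin\beta(\mathbb R)$ and compare with the range. If $\gamma(\theta)$ lay in the range of $\beta$, say $\gamma(\theta)\in\beta(s)$, then \eqref{Hyp gamma-beta-alfa} would give $s\in\alpha(\theta)$, contradicting $\alpha(\theta)=\vuoto$. So $\gamma(\theta)$ lies outside the range of $\beta$, which is an interval because $\beta$ is maximal monotone.

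This only yields that $\gamma(\theta)$ lies entirely below or entirely above the range of $\beta$, not which side. That side determines which of the two implications holds, and nothing in (A2)--(A5) appears to fix it. This degenerate case is where I expect the real difficulty.

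My first attempt would be to argue that it cannot occur in the intended use. One could read the lemma for $\theta\in D(\alpha)$, or exploit that $D(\alpha)$ is an interval on which $\alpha$ is bounded, with $\alpha^{-1}$ maximal monotone. If that fails, I would restrict the statement to $\theta\in D(\alpha)$, which is all that the subsequent uniform $L^\infty$ bound on $\chi_\eps$ presumably requires.
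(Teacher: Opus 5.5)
Your argument in the case $\alpha(\theta)\neq\varnothing$ is correct and is essentially the paper's. You pick a point of $\alpha(\theta)$; the paper takes an arbitrary $\widehat\chi\in\alpha(\theta)$, and any element works since it is $<\chi$, so neither closedness of $\alpha(\theta)$ nor (A4) is needed there. You then use (A5) to get $\gamma(\theta)\in\beta(\widehat\chi)$, monotonicity of $\beta$ to get $\beta(\chi)\ge\gamma(\theta)$, and (A5) again to exclude $\gamma(\theta)\in\beta(\chi)$. Finally, closedness of $\beta(\chi)$ makes the bound on the infimum strict. Your pointwise formulation is in fact cleaner than the paper's, which writes a strict chain $<0<$ where only $\le 0\le$ holds.

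The gap is the case $\alpha(\theta)=\varnothing$. You leave it open, assert that nothing in (A2)--(A5) decides it, and propose restricting the lemma to $\theta\in D(\alpha)$. That restriction is not harmless. In the proposition giving the uniform bound on $\chi_\eps$, the lemma is applied at $\theta=\theta_\eps(x,t)+u_\eps(x,t)$, an arbitrary real number. Moreover, as you correctly note, if $\alpha(\theta)=\varnothing$ could occur, the lemma would be false: both hypotheses hold, and the two conclusions contradict each other.

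The missing idea is that (A4) rules this case out, because a bounded maximal monotone graph in $\mathbb{R}$ has $D(\alpha)=\mathbb{R}$. Take $\rho\notin D(\alpha)$; since $D(\alpha)\neq\varnothing$ for a maximal monotone graph, some point of $D(\alpha)$ lies on one side of $\rho$.
\begin{itemize}
\item If $\{s\in\alpha(r):\ r<\rho\}$ is nonempty, set $\sigma:=\sup\{s\in\alpha(r):\ r<\rho\}$.
\item Otherwise set $\sigma:=\inf\{s\in\alpha(r):\ r>\rho\}$.
\end{itemize}
The bound $|s|\le M$ makes $\sigma$ finite. Monotonicity of $\alpha$ gives $(\sigma-s)(\rho-r)\ge 0$ for all $r\in D(\alpha)$ and $s\in\alpha(r)$. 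Maximality then forces $\sigma\in\alpha(\rho)$, contradicting $\rho\notin D(\alpha)$.

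Equivalently, $\alpha^{-1}$ is maximal monotone with bounded domain, hence surjective. The paper uses this fact silently when it picks $\widehat\chi\in\alpha(\theta)$. With it, your first case covers every $\theta\in\mathbb{R}$ and the proof is complete.
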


\begin{proof}
We first prove \eqref{lemmino1}. Assume therefore that $\chi > \sup\alpha(\theta)$, pick $\widehat\chi \in D(\beta)$ such that
\begin{equation}
  \widehat\chi \in \alpha(\theta).
\end{equation}
By \eqref{Hyp gamma-beta-alfa0} and 
\eqref{Hyp gamma-beta-alfa} we have that
\begin{equation}
  \gamma(\theta) \in \beta(\widehat\chi)
\end{equation}
and
\begin{equation}\label{a-dh c<b-2}
  \inf(\gamma(\theta) - \beta(\widehat\chi)) = \gamma(\theta)-\sup\beta(\widehat\chi)<0<
  \sup(\gamma(\theta) - \beta(\widehat\chi)) = \gamma(\theta)-\inf\beta(\widehat \chi).
\end{equation}
Since $\chi > \sup\alpha(\theta)$, we have that $\widehat\chi < \chi$, so that, by the monotonicity of $\beta$, 
\begin{equation}\label{betaxih<betaxi}
   \inf\beta(\widehat\chi) \le \sup\beta(\widehat\chi) \le \inf\beta(\chi) \le \sup\beta(\chi),
\end{equation}
and from assumption \eqref{Hyp gamma-beta-alfa} we infer that $\gamma(\theta) \not\in  \beta(\chi)$, i.e. either 
$0 < \inf(\gamma(\theta) - \beta(\chi))$, or $\sup(\gamma(\theta) - \beta(\chi)) < 0$. Thus the proof of \eqref{lemmino1} is completed if we  prove that
$0 \ge \inf(\gamma(\theta) - \beta(\chi))$: and in order to do this, we assume by contradiction that $0 < \inf(\gamma(\theta) - \beta(\chi)) = \gamma(\theta) - \sup\beta(\chi)$. This condition, together with \eqref{betaxih<betaxi}, yields
\[
   \sup\beta(\widehat\chi) < \gamma(\theta),
\]
contradicting \eqref{a-dh c<b-2}. The proof of \eqref{lemmino2} is analogous.
\end{proof}

Now we can prove the uniform boundedness of the phase variable by extending the argument of \cite[Lemma 4.2]{Rec23} to the nonregular multivalued case.

\begin{Prop}\label{|ph|< 1}
Let Assumptions \ref{H} be satisfied.
If $\eps > 0$, let $a_\eps, b_\eps \in \erre$, $\chiep \in H^1(0,T;H)$, $\theta_\eps\in L^2(Q)$, and $u_\eps \in L^2(Q)$ be such that
\begin{alignat}{3}
  & \eps\chiep'(x,t) \in \gamma\big(\thetaep(x,t) + u_\eps(x,t)\big)  - \beta\big(\chiep(x,t)\big) & 
     \qquad & \text{for a.e. $(x,t) \in Q$}, \label{ph-rel-eq-|ph|< 1} \\
  & \chiep(x,0) \in \clint{a_\eps,b_\eps}  & \quad & \text{for a.e. $x \in \Omega$}, \\
  & \alpha(r) \subseteq \clint{a_\eps,b_\eps} & \quad & \forall r \in D(\alpha). \label{R(alfa) in a,b}
\end{alignat}
Then
\begin{equation}
  a_\eps \le \chi_{\eps}(x,t) \le b_\eps \qquad  \text{for a.e. $(x,t) \in Q$}.
\end{equation}
\end{Prop}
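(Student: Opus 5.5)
We argue pointwise in $x$, using a truncation (Stampacchia-type) argument built on Lemma \ref{L:inf beta-gamma--inf alfa}. We prove only the upper bound $\chiep \le b_\eps$; the lower bound is symmetric, with \eqref{lemmino2} in place of \eqref{lemmino1}. Since $\chiep \in H^1(0,T;H)$, for a.e. $x\in\Omega$ the map $t\mapsto \chiep(x,t)$ is absolutely continuous on $\clint{0,T}$. Moreover, \eqref{ph-rel-eq-|ph|< 1} holds for a.e. $t$ with some selection $\xi_\eps(x,t)\in\beta(\chiep(x,t))$, so in particular $\chiep(x,t)\in D(\beta)$ for a.e. $t$. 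We fix such an $x$ and consider the function $w(t) := (\chiep(x,t)-b_\eps)^+$. It is absolutely continuous, satisfies $w(0)=0$, and
\[
  w'(t) = \chiep'(x,t)\,\mathds{1}_{\{\chiep(x,t) > b_\eps\}} \qquad \text{for a.e. } t.
\]

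The key step is to show that $\chiep'(x,t) < 0$ for a.e. $t$ with $\chiep(x,t) > b_\eps$. Fix such a $t$ and set $r := \thetaep(x,t)+u_\eps(x,t)$. We distinguish two cases.

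If $r\in D(\alpha)$, then by \eqref{R(alfa) in a,b} we have $\sup\alpha(r)\le b_\eps<\chiep(x,t)$. Applying \eqref{lemmino1} of Lemma \ref{L:inf beta-gamma--inf alfa} with $\theta=r$ and $\chi=\chiep(x,t)\in D(\beta)$ gives $\sup[\gamma(r)-\beta(\chiep(x,t))]<0$. Hence $\eps\chiep'(x,t) < 0$.

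If $r\notin D(\alpha)$, the lemma does not apply directly, and this is the main obstacle. Here we use that $\alpha$ is maximal monotone and bounded, so $D(\alpha)=\erre$. Indeed, the inverse $\alpha^{-1}$ is maximal monotone with bounded domain contained in $\clint{-M,M}$, so its range is all of $\erre$. Concretely, a maximal monotone graph whose values are confined to $\clint{-M,M}$ must have vertical rays at the ends of its domain, which forces the domain to be the whole line. If we prefer not to rely on this, we can argue directly instead. Consider the set $\{s\in D(\beta): \gamma(r)\in\beta(s)\}$. If it is nonempty, then by \eqref{Hyp gamma-beta-alfa} we get $s\in\alpha(r)$, so $r\in D(\alpha)$, a contradiction. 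If it is empty, then $\gamma(r)$ lies outside the range of $\beta$. Combined with the compatibility condition and the boundedness of $\alpha$, this means $\gamma(r)$ is either above or below all of $\beta$ on $\clint{a_\eps,b_\eps}$, which again contradicts maximality. Either way the case $r\notin D(\alpha)$ cannot occur.

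In conclusion, $w'(t)\le 0$ for a.e. $t$, and since $w(0)=0$ and $w\ge0$, we get $w\equiv 0$. This holds for a.e. $x$, hence $\chiep\le b_\eps$ a.e. in $Q$. Measurability issues, namely choosing a full-measure set of $(x,t)$ on which the inclusion holds and $\partial_t\chiep$ exists, are handled by Fubini exactly as in the proof of Proposition \ref{P:uniqeps}.
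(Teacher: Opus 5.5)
Your proof is correct and follows essentially the paper's route: for a.e.\ $x$ you use absolute continuity of $t\mapsto\chi_\eps(x,t)$, then Lemma~\ref{L:inf beta-gamma--inf alfa} to get $\chi_\eps'(x,t)<0$ wherever $\chi_\eps(x,t)>b_\eps$, with your truncation $w=(\chi_\eps-b_\eps)^+$ playing the role of the paper's first-exit contradiction argument. Your observation that $D(\alpha)=\erre$ (a bounded maximal monotone graph has full domain) makes explicit a step the paper uses tacitly, since the lemma's proof needs $\alpha(\theta)\neq\vuoto$; that argument suffices by itself, whereas the ``direct'' alternative you sketch never pins down a contradiction and should be dropped.
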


\begin{proof}
Let us observe that $\chiep' \in L^1(Q)$, therefore by the Fubini theorem we have that $\chiep'(x,\cdot) \in L^1(0,T)$ for a.e.\ $x \in \Omega$. If $\varphi \in C_c^\infty(\opint{0,T})$ and $z \in L^2(\Omega)$, then Fubini theorem also entails that
\begin{align}
  & \int_\Omega z(x)\int_0^T(\chiep(x,t)\varphi'(t)+\chiep'(x,t)\varphi(t))\de t \de x  \notag \\
  & = \int_0^T\int_\Omega z(x)(\chiep(x,t)\varphi'(t)+\chiep'(x,t)\varphi(t))\de x \de t  \notag \\
  & = \int_0^T(z,\chiep(t))_H \varphi'(t)\de t + \int_0^T(z,\chiep'(t))_H \varphi(t) \de t.  \notag
\end{align}   
From the previous chain of equalities, recalling that $\chiep \in H^1(0,T;H)$ and using again the Fubini theorem, we infer that       
\begin{align}
&  \int_\Omega z(x)\int_0^T(\chiep(x,t)\varphi'(t)+\chiep'(x,t)\varphi(t))\de t \de x  \notag \\
& = \int_0^T\Big(z,\chiep(0) + \int_0^t\chiep'(s) \de s\Big)_H \varphi'(t)\de t +
        \int_0^T(z,\chiep'(t))_H \varphi(t) \de t  \notag \\
  & = (z,\chiep(0))_H\int_0^T \varphi'(t)\de t + \int_0^T\int_0^t(z,\chiep'(s))_H \varphi'(t)\de s\de t +
        \int_0^T(z,\chiep'(t))_H \varphi(t) \de t  \notag \\ 
  & =  \int_0^T(z,\chiep'(s))_H\int_s^T\varphi'(t) \de t\de s + \int_0^T(z,\chiep'(t))_H  \varphi(t)\de t  \notag \\
  & =  -\int_0^T(z,\chiep'(s))_H\varphi(s)\de s + \int_0^T(z,\chiep'(t))_H  \varphi(t)\de t  =0,        \notag
\end{align}
whence, by the arbitrariness of $z$, we infer that 
$\int_0^T(\chiep'(x,t)\varphi(t) +\chiep(x,t)\varphi'(t))\de t = 0$ for a.e.\ $x \in \Omega$, i.e.\ that 
$\chiep'(x,\cdot)$ is the distributional derivative of $\chiep(x,\cdot)$ for a.e.\ $x \in \Omega$. Therefore there exists a measurable set $A \subseteq \Omega$ such that $|\Omega \smallsetminus A| = 0$ and
\[
  \chiep(x,t) = \chi_{\eps}(x,0) + \int_0^t \chiep'(x,s) \de s \qquad \forall t \in \clint{0,T}, 
  \qquad \forall x \in A.
\]
It follows that for every $x \in A$ the function $\chiep(x,\cdot) : \clint{0,T} \function \erre$ is absolutely continuous. It is not restrictive to assume that $a_\eps \le \chiep(x,0) \le b_\eps$ for every $x \in A$. 
Let us fix $x \in A$ and prove that $a_\eps \le \chiep(x,t) \le b_\eps$ for every $t \in \clint{0,T}$. Indeed, if by contradiction this were not true, there would exist $t_0 \in \cldxint{0,T}$ such that $\chiep(x,t_0) \not\in \clint{a_\eps,b_\eps}$.  
Let us first assume that $\chiep(x,t_0) > b_\eps$. Then, by continuity, there exists 
$a_0 \in \clsxint{0,t_0}$ such that $\chiep(x,a_0) = b_\eps$ and $\chiep(x,t) > b_\eps$ for every $t \in \cldxint{a_0,t_0}$. Therefore by \eqref{R(alfa) in a,b} we have $\chiep(x,t) > \sup\{\alpha(r)\ :\ r \in D(\alpha)\}$, in particular
$\chiep(x,t) > \sup\{\alpha\big(\thetaep(x,t) + u_\eps(x,t)\big)\}$ for every $t \in \cldxint{a_0,t_0}$. Hence by Lemma 
\ref{L:inf beta-gamma--inf alfa} 
we have that
$\sup[\gamma(\thetaep(x,t) + u_\eps(x,t)) - \beta(\chiep(x,t))] < 0$. It follows that $\chiep'(x,t) < 0$ for a.e. $t \in \cldxint{a_0,t_0}$, therefore, as $\chiep(x,\cdot)$ is absolutely continuous, we infer that $\chiep(x,\cdot)$ is decreasing on $\cldxint{a_0,t_0}$, a contradiction. The remaining case when $\chiep(x,t_0) < a_\eps$ is dealt in a completely analogous way.
\end{proof}

\subsection{$L^2$-estimates}

In the reminder of the paper we will need to use the function 
$\widehat{h} : \clint{0,T} \function V'$ defined by 
\begin{equation}\label{hat h-fine}
    \widehat{h}(t) := \integr h(s) \de s,
\end{equation}
for every $h \in L^1(0,T;V')$.

\begin{Lem}\label{L2estlem}
Let Assumptions \ref{H} be satisfied.
Let $\eps_0 > 0$ be fixed and assume that $0 < \eps \le \eps_0$ and that
$f_\eps \in L^2(0,T;H)$, $u_\eps \in L^2(Q)$, $\theta_{0\eps} \in L^2(\Omega)$, 
$\chi_{0\eps} \in L^2(\Omega)$, and $\chi_{0\eps}(x) \in \overline{D(\gamma)}$ for a.e. $x \in \Omega$. If $(\thetaep,\chiep)$ is the solution of 
\eqref{thep reg}-\eqref{chi-i.c.ep}, and if
\begin{equation}\label{chzep bdd}
 \sup_{0 < \eps \le \eps_0} \norm{\chizep}{L^\infty(\Omega)} < \infty,
\end{equation}
then there exists $C > 0$ independent of $\eps$ such that
\begin{align}
& \norma{\thetaep}{L^2(0,t;H)} +
\norma{\thetahatep}{L^\infty(0,T;V)} \notag \\
& \le C(1 + \normaq{\thetazep}{H} + \normaq{\chizep}{H} + \normaq{\hat{f}_\eps}{L^2(0,t;H)} + \normaq{u_\eps}{L^2(Q)}).\label{L2est}
\end{align}
\end{Lem}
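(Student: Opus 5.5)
The plan is to integrate the energy balance \eqref{eneqep} in time and test the resulting equation with $\thetaep$. The point of this "hat" trick is that the phase term $\chiep'$ becomes $\chiep(t)-\chizep$, which is uniformly bounded in $L^\infty$, so the relaxation law never has to be used directly.

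\textbf{Step 1: bound on the phase.} By \eqref{chzep bdd} and (A4), set
\[
M' := \max\bigl\{M, \sup_{\eps}\norm{\chizep}{L^\infty(\Omega)}\bigr\},
\]
and apply Proposition \ref{|ph|< 1} with $a_\eps=-M'$ and $b_\eps=M'$. This gives $\norm{\chiep}{L^\infty(Q)}\le M'$ uniformly in $\eps$. In particular $\norm{\chiep}{L^2(0,t;H)}^2\le M'^2 T|\Omega|$.

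\textbf{Step 2: the time-integrated balance.} Integrating \eqref{eneqep} on $\opint{0,t}$ and using \eqref{chi-i.c.ep}, I get
\[
\thetaep(t) + A\thetahatep(t) = \hat f_\eps(t) + \thetazep + \chizep - \chiep(t)
\]
in $V'$ for every $t$. Here $\thetahatep\in H^1(0,T;V)$, so $A\thetahatep(t)\in V'$ makes sense. Note also that $\thetahatep(0)=0$.

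\textbf{Step 3: testing.} I test this identity with $\thetaep(t)=\thetahatep'(t)\in V$ and integrate over $\opint{0,t}$. Since $\fdual{A\thetahatep'}{\thetahatep}$ is the time derivative of $\tfrac12\normaq{\nabla\thetahatep}{H}$, this yields
\[
\norma{\thetaep}{L^2(0,t;H)}^2 + \tfrac12\normaq{\nabla\thetahatep(t)}{H}
\le \int_0^t\!\bigl(\norma{\hat f_\eps}{H}+\norma{\thetazep}{H}+\norma{\chizep}{H}+\norma{\chiep}{H}\bigr)\norma{\thetaep}{H}\de s .
\]
Young's inequality then absorbs half of $\norma{\thetaep}{L^2(0,t;H)}^2$ into the left-hand side. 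The remaining right-hand side is bounded by
\[
C\bigl(\normaq{\hat f_\eps}{L^2(0,t;H)} + T\normaq{\thetazep}{H} + T\normaq{\chizep}{H} + M'^2T|\Omega|\bigr).
\]

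\textbf{Step 4: the full $V$-norm and the final form.} The $L^2$ part of $\thetahatep$ is controlled from $\norm{\thetahatep(t)}{H}\le \sqrt{t}\,\norma{\thetaep}{L^2(0,t;H)}$. Alternatively one can use Poincaré's inequality on $V$, provided $\Gamma_0$ has positive measure. Taking the supremum over $t$ gives the bound on $\norma{\thetahatep}{L^\infty(0,T;V)}$. The left-hand side of \eqref{L2est} has unsquared norms while the right-hand side has squared ones; this is reconciled by $a\le 1+a^2$. The term $\normaq{u_\eps}{L^2(Q)}$ is not even needed, so adding it on the right is harmless.

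\textbf{Main obstacle.} The only delicate point is justifying the chain rule $\int_0^t\fdual{A\thetahatep'}{\thetahatep}=\tfrac12\normaq{\nabla\thetahatep(t)}{H}$. It holds since $\thetahatep\in H^1(0,T;V)$ and $A$ is symmetric, so the map $t\mapsto\tfrac12\fdual{A\thetahatep}{\thetahatep}$ is absolutely continuous. The other point is that $\thetaep$ is only in $L^2(0,T;V)$ with $\thetaep'$ in a sum space. This is fine because we never differentiate $\thetaep$: we only use \eqref{eneqep} integrated in time, which holds in $V'$ by \eqref{thep'reg} and $\chiep\in H^1(0,T;H)$.
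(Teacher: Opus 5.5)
Your proof is correct and follows the paper's argument: the uniform $L^\infty$ bound on $\chiep$ from Proposition~\ref{|ph|< 1}, then testing the time-integrated energy balance with $\thetaep$, which is exactly \eqref{L2-uno}. The paper also tests the relaxation law by $\chiep - r_0$ to obtain \eqref{L2-due}, but given the $L^\infty$ bound that step is redundant, so you are right that $\normaq{u_\eps}{L^2(Q)}$ is not needed on the right-hand side; your Cauchy--Schwarz control of the $H$-part of $\thetahatep(t)$ also supplies a detail the paper leaves implicit.
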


\begin{proof}
Let us first observe that by \eqref{chzep bdd} and by Proposition \ref{|ph|< 1}, $\chiep$ is uniformly bounded w.r.t. $\eps$ by a constant $K > 0$.
Therefore integrating the energy balance equation \eqref{eneqep} in time, testing the resulting equation by $\thetaep$, and integrating in time and space we get
\begin{align}
  & \normaq{\thetaep}{L^2(0,t;H)}+
  \frac{1}{2}\normaq{\nabla\thetahatep(t)}{H^d} \notag \\
  & =
  \integr\intom (\thetazep + \chizep +\hat{f}_\eps(s))\thetaep(s) \de
  s -
  \integr\intom \chiep(s)\thetaep(s) \de s \notag \\
  & \le 2(1 + K)(\normaq{\thetazep}{H} + \normaq{\chizep}{H} + \normaq{\hat{f}_\eps}{L^2(0,t;H)}) + \frac{1}{4}\normaq{\thetaep}{L^2(0,t;H)}. \label{L2-uno}
\end{align}
Now  let $r_0 \in D(\beta)$. From \eqref{phreleps in} we have that there exists $\xiep\in L^2(Q)$ with $\xiep\in\beta(\chiep)$ such that $\eps\chiep'+\xiep - \beta^o(r_0) =\gamma(\thetaep+u_\eps) - \beta^o(r_0)$ a.e. in $Q$, thus
 by multiplying this last equation by $\chiep - r_0$ and observing that $(\xiep(x,t) - \beta^o(r_0))(\chiep(x,t) - r_0) \ge 0$ due to the monotonicity of $\beta$, we infer the existence of a constant $C_1 > 0$ independent of $\eps$, depending only on $C_\gamma$, $\beta^o(r_0)$, $\eps_0$, such that
\begin{align}
\frac{\eps}{2}\normaq{\chiep(t)}{H} 
& \le \integr\intom
[\gamma(\thetaep(s) + u_\eps(s))-\beta^o(r_0)](\chiep(s) - r_0)\de s +
\eps\integr\intom r_0\chiep'(s) \de s \notag \\
& \le \integr\intom [C_\gamma(1 + |\thetaep(s)| + |u_\eps(s)|)+|\beta^o(r_0)|]|\chiep(t)-r_0| + 
\eps r_0\intom(\chiep(t) - \chizep) \notag \\
& \le \frac{1}{4}\normaq{\thetaep}{L^2(0,t;H)} + 
C_1(1 + \normaq{\chizep}{H}+ \normaq{u_\eps}{L^2(Q)} + K). \label{L2-due}
\end{align}
Thus adding the two previous inequalities \eqref{L2-uno} and \eqref{L2-due}, we get a constant $C > 0$, independent of $\eps$, such that \eqref{L2est} holds.
\end{proof}

\subsection{$L^1$-estimates}

In this section we perform the necessary $L^1$-estimates needed in our limit procedure.

\begin{Lem}\label{est1}
Let \emph{(A1)-(A3)} of Assumptions \ref{H} be satisfied. 
For every $\eps > 0$ let us assume that 
\begin{align}
& f_\eps \in L^2(0,T;H) \cap BV(\clint{0,T};L^1(\Omega)), \label{fBV} \\
&  u_\eps \in C(\clint{0,T};H^1(\Omega)), \quad u_\eps', \Delta u_\eps \in BV(\clint{0,T};L^1(\Omega)), \label{DDuBV} \\
& u_\eps(t)|_\Gamma = u_\eps(0)|_\Gamma \qquad  \text{for a.e. $t\in \clint{0,T}$}, \label{u(t)=onbd} \\
& \thetazep \in V, \quad \Delta \thetazep \in L^1(\Omega),\quad \chi_{0\eps} \in L^2(\Omega),
\end{align}
and
\begin{equation}\label{datieps piccoli}
  \gamma(\thetazep(x)+u_\eps(x,0)) - \beta(\chizep(x)) \subseteq \clint{-\eps,\eps} \qquad \text{for a.e. $x \in \Omega$}.
\end{equation}
If $(\thetaep,\chiep)$ is such that 
\eqref{thep reg}-\eqref{chi-i.c.ep} hold,
then $A \thetaep \in L^\infty(0,T;L^1(\Omega))$  and 
there exists a constant $C > 0$ independent of $\eps$ (depending only on $T$ and $|\Omega|$) such that
\begin{align}
  & \norma{\thetaep}{L^\infty(0,T;L^1(\Omega))} + 
  \norma{\chiep}{L^\infty(0,T;L^1(\Omega))} \notag \\
  & \le
  \norma{\thetazep}{L^1(\Omega)} + \norma{\chizep}{L^1(\Omega)} + \norma{u_\eps(0)}{L^1(\Omega)}
  + \norma{\Delta\thetazep}{L^1(\Omega)} + \norma{\Delta u_\eps(0)}{L^1(\Omega)} \notag \\
  & \phantom{\le \ } + C(1 + \norma{f_\eps}{L^1(Q)} + \norma{u_\eps'}{L^1(0,T;L^1(\Omega)(\Omega))} + \norma{\Delta u_\eps}{L^1(0,T;L^1(\Omega))}),
  \label{estla-th-chi-supL1}
\end{align}  
and
\begin{align}\label{e-stima2-Vis01-det}
  & \norma{\thetaep'}{L^\infty(0,T;L^1(\Omega))} + \norma{\chiep'}{L^\infty(0,T;L^1(\Omega))} + 
  \norma{A\thetaep}{L^\infty(0,T;L^1(\Omega))}\notag \\
  & \le C + 
  \norma{\Delta\thetazep}{L^1(\Omega)} + \norma{\Delta u_\eps(0)}{L^1(\Omega)} \notag \\
  & \phantom{\le\ } +
  \norma{f_\eps}{BV(\clint{0,T};L^1(\Omega))} +
  \norma{u'_\eps}{BV(\clint{0,T};L^1(\Omega))} + 
  \norma{\Delta u_\eps}{BV(\clint{0,T};L^1(\Omega))},
\end{align}
where for every $h \in BV(\clint{0,T};L^1(\Omega))$ we adopt the convention that
$h(0) := \lim_{t \searrow 0} h(t)$.
\end{Lem}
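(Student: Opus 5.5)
The plan is to use the $L^1$-contraction technique of Proposition \ref{P:uniqeps} twice. The first time it is applied to the solution itself, which gives \eqref{estla-th-chi-supL1}. The second time it is applied to time increments $(\thetaep(t+h)-\thetaep(t),\chiep(t+h)-\chiep(t))$, which gives \eqref{e-stima2-Vis01-det}; this is Visintin's argument in \cite{Vis01}, adapted to the multivalued $\beta$. It is convenient to work with $\vartheta := \thetaep + u_\eps$. By \eqref{u(t)=onbd}, $\vartheta(t)-\vartheta(s)\in V$, so increments of $\vartheta$ satisfy homogeneous boundary conditions. Moreover $\vartheta$ solves
\[
\vartheta' + \chiep' - \Delta\vartheta = f_\eps + u_\eps' - \Delta u_\eps =: F_\eps ,
\]
with $F_\eps \in BV(\clint{0,T};L^1(\Omega))$. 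By Lemma \ref{L:regolarita' eq. calore}, applied to $F=f_\eps-\chiep'\in L^2(0,T;H)$ and $w_0=\thetazep\in V$, we have $\thetaep\in H^1(0,T;H)$ and $A\thetaep=-\Delta\thetaep$. Hence all the pointwise manipulations are legitimate.

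The key step is the increment estimate. Fix $h>0$ and set $\delta\theta(t) := \vartheta(t+h)-\vartheta(t)$ and $\delta\chi(t):=\chiep(t+h)-\chiep(t)$. Then
\[
(\delta\theta+\delta\chi)' - \Delta\delta\theta = F_\eps(t+h)-F_\eps(t).
\]
Test by $\sign_\mu(\delta\theta)\in V$ and let $\mu\searrow0$. For the phase equation, write $\eps\chiep'=\gamma(\vartheta)-\xiep$ with $\xiep\in\beta(\chiep)$. Exactly as in \eqref{monarg1}--\eqref{monarg3}, monotonicity of $\gamma$ and $\beta$ gives $\delta\chi'\sign_0(\delta\theta)\ge \partial_t|\delta\chi|$. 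Consequently
\[
\frac{\de}{\de t}\intom(|\delta\theta|+|\delta\chi|) \le \norma{F_\eps(t+h)-F_\eps(t)}{L^1(\Omega)}.
\]
Integrating, dividing by $h$, and letting $h\searrow0$ bounds $\norma{\vartheta'(t)}{L^1}+\norma{\chiep'(t)}{L^1}$ by two contributions. The first is the total variation of $F_\eps$. The second is $\limsup_{h}h^{-1}\intom(|\delta\theta(0)|+|\delta\chi(0)|)$, i.e.\ essentially $\norma{\thetaep'(0)}{L^1}+\norma{u_\eps'(0)}{L^1}+\norma{\chiep'(0)}{L^1}$ in the sense of right limits.

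The initial-time term is where I expect the main obstacle, since it is where \eqref{datieps piccoli} and the compatibility with $\beta$ enter. One would like
\[
\eps\chiep'(0)=\gamma(\thetazep+u_\eps(0))-\xiep(0)\in\clint{-\eps,\eps},
\]
so that $|\chiep'(0)|\le1$. Then, from the energy equation, $\thetaep'(0)=f_\eps(0)+\Delta\thetazep-\chiep'(0)$ would be bounded in $L^1$ by $\norma{f_\eps(0)}{L^1}+\norma{\Delta\thetazep}{L^1}+|\Omega|$. The difficulty is that $\chiep'$ is only $L^2$ in time, so $\chiep'(0)$ has no pointwise meaning. I would make this rigorous in one of two ways. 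The first option uses the strong solution of the evolution inclusion for $\chiep$: with $\gamma(\vartheta)$ continuous in $L^2$ near $0$, the right derivative equals the minimal section $(\gamma(\vartheta)-\beta(\chiep))^o$, and this lies in $\clint{-\eps,\eps}$ by \eqref{datieps piccoli}. The second option first regularizes the data in time (for instance by extending them constantly for $t<0$) so that the difference quotients at $0$ are controlled. Failing both, I would estimate $h^{-1}\intom|\chiep(h)-\chizep|$ directly from the ODE at each $x$, using $\sup[\gamma-\beta]\le\eps$ together with continuity of $\gamma(\vartheta(x,\cdot))$. This gives \eqref{e-stima2-Vis01-det} for $\thetaep'$ and $\chiep'$, with $\norma{\Delta u_\eps(0)}{L^1}$ appearing through $F_\eps(0)$. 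The bound on $A\thetaep=-\Delta\thetaep=f_\eps-\thetaep'-\chiep'$ then follows by comparison, and it also shows $A\thetaep\in L^\infty(0,T;L^1(\Omega))$.

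Finally, \eqref{estla-th-chi-supL1} follows by writing
\[
\thetaep(t)=\thetazep+\integr\thetaep'(s)\de s ,\qquad \chiep(t)=\chizep+\integr\chiep'(s)\de s ,
\]
and using the $L^\infty(0,T;L^1(\Omega))$ bound on the derivatives just obtained. Alternatively, one can test directly by $\sign_\mu(\vartheta-\bar r)$ with $\bar r$ chosen so that $\gamma(\bar r)\in\beta(\bar s)$ and repeat the contraction argument against the constant pair $(\bar r,\bar s)$. In that version the $u_\eps(0)$ and $u_\eps'$ terms arise from passing between $\thetaep$ and $\vartheta$, and the constant $C$ depends only on $T$, $|\Omega|$.
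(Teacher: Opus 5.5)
Your increment estimate for $\delta_h$ is exactly the second half of the paper's proof. The step you flag as the main obstacle, however, is a genuine gap, and none of your three options closes it. You need $O(h)$ control of $\norma{\vartheta(h)-\vartheta(0)}{L^1(\Omega)}+\norma{\chiep(h)-\chizep}{L^1(\Omega)}$.

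\emph{Option (i).} It rests on the characterization of the right derivative as $\eps^{-1}(\gamma(\vartheta)-\beta(\chiep))^o$ at every $t$. That is a theorem for forcings in $W^{1,1}(0,T;H)$ (or BV with values in $H$). Here $\gamma$ is merely continuous and $u_\eps'$ is only $L^1$ in space, so it does not apply. In any case it says nothing about an $L^1$ right derivative of $\thetaep$ at $t=0$, which your formula $\thetaep'(0)=f_\eps(0)+\Delta\thetazep-\chiep'(0)$ presupposes.

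\emph{Option (ii).} This is not an argument. The initial difference quotients belong to the solution, not to the data. Moreover $(\thetazep,\chizep)$ is not a stationary state, so extending the data backward in time gives nothing.

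\emph{Option (iii).} At best this yields $\norma{\chiep(h)-\chizep}{L^1(\Omega)}=O(h)$. To bound the $\vartheta$-increment you must test the heat equation by $\sign_\mu(\vartheta(t)-\vartheta(0))$. The resulting coupling term $\intom\chiep'\sign_0(\vartheta-\vartheta(0))$ is not controlled by a bound on $\chiep(h)-\chizep$ alone.

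The missing idea is to run the $L^1$-contraction argument first against the non-constant initial pair $(\overline\theta_{0\eps},\chizep)$, where $\overline\theta_{0\eps}=\thetazep+u_\eps(0)$.

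\emph{Energy side.} Test by $\sign_\mu(\thetabarep(t)-\overline\theta_{0\eps})$, which is admissible by \eqref{u(t)=onbd}. Splitting $\Delta\thetabarep=\Delta(\thetabarep-\overline\theta_{0\eps})+\Delta\overline\theta_{0\eps}$ produces the term $\norma{\Delta\overline\theta_{0\eps}}{L^1(\Omega)}$.

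\emph{Phase side.} Pick $\xi^t_{0\eps}\in\beta(\chizep)$ ordered relative to $\xiep(t)$ like $\chiep(t)$ relative to $\chizep$. The sign trick of \eqref{monarg1}--\eqref{monarg3}, together with \eqref{datieps piccoli}, then gives $\chiep'\sign_0(\thetabarep-\overline\theta_{0\eps})\ge\partial_t|\chiep-\chizep|-2$.

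Together these yield \eqref{st1bar}:
\[
\norma{\thetabarep(t)-\overline\theta_{0\eps}}{L^1(\Omega)}+\norma{\chiep(t)-\chizep}{L^1(\Omega)}\le t\big(\norma{\Delta\overline\theta_{0\eps}}{L^1(\Omega)}+2|\Omega|\big)+\int_0^t\norma{\overline f_\eps(s)}{L^1(\Omega)}\de s .
\]
At $t=h$ this is precisely the $O(h)$ bound on both initial increments, and no pointwise meaning of derivatives at $t=0$ is needed. It is also where \eqref{estla-th-chi-supL1} comes from, with $L^1(Q)$ norms of the data.

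Your own routes to \eqref{estla-th-chi-supL1} do not work as stated. Integrating the derivative bound only gives that inequality with BV norms in place of $L^1(Q)$ norms, which is weaker than claimed. Your alternative of contracting against a constant pair $(\bar r,\bar s)$ with $\gamma(\bar r)\in\beta(\bar s)$ fails for three reasons:
\begin{itemize}
\item Such a pair need not exist under (A1)--(A3). The lemma does not assume (A5), and \eqref{datieps piccoli} gives only an approximate equilibrium.
\item $\sign_\mu(\vartheta-\bar r)\notin V$, because $\vartheta(t)=u_\eps(0)$ on $\Gamma_0$.
\item The resulting constant would depend on $\bar r,\bar s$, not only on $T$ and $|\Omega|$.
\end{itemize}
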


\begin{proof}
Let us observe that by Lemma \ref{L:regolarita' eq. calore} we have that $A\thetaep = -\Delta\thetaep \in L^2(0,T;H)$ and  $\thetaep' + \chiep' - \Delta \thetaep =
f_\eps$ a.e. in $Q$, hence if we set 
\begin{align}
  & \thetabarep := \thetaep + u_\eps, \qquad \label{tetalabar}
     \overline\theta_{0\eps} := \thetazep + u_\eps(0)\\
  & \overline{f}_\eps := f_\eps + u_\eps' - \Delta u_\eps, \label{fbar}
\end{align}
we have that
\begin{alignat}{3}
   & \thetabarep'(t) + \chiep'(t) - \Delta\thetabarep(t) = \overline{f}_\eps(t) 
   & \qquad &  \text{a.e. in $\Omega$,} \label{eneqmu-over-det} \\
   & \thetabarep(0) = \overline\theta_{0\eps}& \qquad &  \text{a.e. in $\Omega$},
\end{alignat}	 
for a.e. $t \in \opint{0,T}$. Moreover let $\xiep \in L^2(Q)$ be such that 
\begin{alignat}{3}
  & \eps\chiep'(x,t) + \xiep(x,t) =
  \gamma\big(\thetaep(x,t) + u_\eps(x,t)\big)
  & \qquad & \text{for a.e. $x \in \Omega$}, \label{phreleps} \\
  & \xiep(x,t) \in \beta(\chiep(x,t)) & \qquad & \text{for a.e. $x \in \Omega$}, \label{phreleps2}
\end{alignat}
for a.e. $t \in \opint{0,T}$. Now we fix $t \in \opint{0,T}$ such that \eqref{eneqmu-over-det}, \eqref{phreleps} hold and such that $(\partial|\thetabarep|/\partial t)(x,t)$ and $(\partial|\chiep|/\partial t)(x,t)$ exist for a.e. $x \in \Omega$: the set of such $t$'s has full measure in $\opint{0,T}$ by virtue of \cite[Theorem 2.1.11, p. 48]{Zie89}.
Now we adapt the argument in  \cite[p. 1477]{Vis01} to the multivalued case and to the presence of the datum $u_\eps$. Recalling that $\sign_\mu(r)$ is defined by \eqref{sgnm} for $\mu > 0$, by \eqref{u(t)=onbd} we have that $\thetabarep(t) - \overline\theta_{0\eps}(t) \in V$ and
$\sign_\mu(\thetabarep(t) - \overline\theta_{0\eps}(t)) \in V$, hence
\begin{align}
  & -\intom\Delta\thetabarep(t)\sign_\mu(\thetabarep(t)  - \overline\theta_{0\eps}) \notag \\
  & = -\intom\Delta(\thetabarep(t) - \overline\theta_{0\eps})
  {\sign_\mu(\thetabarep(t)-\overline\theta_{0\eps}) } 
      - \intom\Delta\overline\theta_{0\eps}\sign_\mu(\thetabarep(t)-\overline\theta_{0\eps})\notag \\
  & =  \intom|\nabla(\thetabarep(t) -\overline\theta_{0\eps})|^2
                     \sign_\mu'(\thetabarep(t) -\overline\theta_{0\eps}) - 
         \intom \Delta\overline\theta_{0\eps} \sign_\mu(\thetabarep(t) -\overline\theta_{0\eps} ), \notag \\
  & \ge -\intom\Delta\overline\theta_{0\eps} \sign_\mu(\thetabarep(t) -\overline\theta_{0\eps} )
           \ge -\intom |\Delta\overline\theta_{0\eps}|. \label{e-stima1-1-Vis01}
\end{align}
Thus, multiplying \eqref{eneqmu-over-det} by $\sign_\mu(\thetabarep(t) - \overline\theta_{0\eps})$, we infer that
\begin{align}
  \intom(\thetabarep'(t) + \chiep'(t))\sign_\mu(\thetabarep(t)-\overline\theta_{0\eps})
  \le \intom|\Delta\overline\theta_{0\eps}| + \intom|\overline f_\eps(t)|,
\end{align}
and, taking the limit as $\mu \searrow 0$,
\begin{equation}\label{e-stima1-2-Vis01}
  \intom(\thetabarep'(t) + \chiep'(t))\sign_0(\thetabarep(t) -\overline\theta_{0\eps}) \le 
   \intom|\Delta\overline\theta_{0\eps}| + \intom|\overline f_\eps(t)|,
\end{equation}
where $\sign_0$ is defined by \eqref{sgn0}.
Now fix $x \in \Omega$ such that $(\partial\chiep/\partial t)(x,t)$ and $(\partial|\chiep-\chizep|/\partial t)(x,t)$ exist and 
define $\xizep^t : \Omega \function \erre$ in such a way that 
\[
  \xizep^t(x) \in \beta(\chizep(x))
  \quad \text{and} \quad
  \begin{cases}
    \xiep(x,t) \ge \xizep^t(x) & \text{if $\chiep(x,t) \ge \chizep(x)$}, \\
    \xiep(x,t) \le \xizep^t(x) & \text{if $\chiep(x,t) < \chizep(x)$}.
  \end{cases}
\]
By \eqref{phreleps}, by the monotonicity of $\gamma$ and $\beta$, and by \eqref{datieps piccoli}, we find
\begin{align}
  & \chiep '(x,t)\sign_0(\thetabarep(x,t)  - \overline\theta_{0\eps}(x)) \notag \\
  &   = \frac{1}{\eps}(\gamma(\thetabarep(x,t)) - \xiep(x,t))
        \sign_0(\thetabarep(x,t)-\overline\theta_{0\eps}(x)) \notag \\
  & = \frac{1}{\eps}[(\gamma (\thetabarep(x,t)) - \gamma(\overline\theta_{0\eps}(x)))-(\xiep(x,t) - \xizep^t(x))] 
        \sign_0(\thetabarep(x,t) -\overline\theta_{0\eps}(x)) \notag\\  
  &\phantom{=\ } + \frac{1}{\eps}(\gamma(\overline\theta_{0\eps}(x,t)) - \xizep^t(x))
                              \sign_0(\thetabarep(x,t) - \overline\theta_{0\eps}(x)) \notag \\
  & \ge \frac{1}{\eps}[(\gamma (\thetabarep(x,t)) - \gamma(\overline\theta_{0\eps}(x)))-(\xiep(x,t) - \xizep^t(x))]
                                  \sign_0(\chiep(x,t) - \chizep(x)) \notag \\
  &\phantom{=\ } +  
        \frac{1}{\eps}(\gamma(\overline\theta_{0\eps}(x,t)) - \xizep^t(x))
                              \sign_0(\thetabarep(x,t) - \overline\theta_{0\eps}(x)) \notag \\
  & =  \chiep'\sign_0(\chiep(x,t) - \chizep(x)) \notag \\
  & \phantom{=\ }+ 
         \frac{1}{\eps}(\gamma(\overline\theta_{0\eps}(x))-\xizep^t(x))
         [\sign_0(\thetabarep(x,t)-\overline\theta_{0\eps}(x))-\sign_0(\chiep(x,t) -\chizep(x))] \notag \\
  & \ge \frac{\partial |\chiep - \chizep|}{\partial t}(x,t) - 2  \qquad \text{a.e. in $Q$}. \label{stima1-3-Vis01}
\end{align}
Hence we have proved that 
\begin{equation}
  \frac{\de}{\de t} \intom(|\thetabarep(t) - \overline{\theta}_{0\eps}| + |\chiep(t) - \chizep|) 
  \le 
    \intom(|\Delta\overline\theta_{0\eps}| + |\overline{f}_\eps(t)| + 2)
   \quad \text{for a.e. $t \in \opint{0,T}$}, \notag
\end{equation}
and integrating in time we obtain
\begin{align}
  & \norma{\thetabarep(t) - \overline{\theta}_{0\eps}}{L^1(\Omega)} + 
  \norma{\chiep(t) - \chizep}{L^1(\Omega)} \notag \\
  & \le 
  t(\norma{\Delta\overline\theta_{0\eps}}{L^1(\Omega)} + 2|\Omega|) +
  \integr\norma{\overline{f}_\eps(s)}{L^1(\Omega)}\de s \quad \forall t \in \clint{0,T}.
  \label{st1bar}
\end{align}
Since 
$|\norma{\thetaep(t)-\thetazep}{L^1(\Omega)} - \norma{u_\eps(t)-u_\eps(0)}{L^1(\Omega)}|
   \le \norma{\thetabarep(t) - \overline\theta_{0\eps}}{L^1(\Omega)}$, from \eqref{st1bar}
we infer that

\begin{align}
  & \norma{\thetaep(t)-\thetazep}{L^1(\Omega)} +  \norma{\chiep(t) - \chizep}{L^1(\Omega)} \notag \\
  & \le \norma{u_\eps(t)-u_\eps(0)}{L^1(\Omega)} + 
         t(\norma{\Delta\overline\theta_{0\eps}}{L^1(\Omega)} + 2|\Omega|) +
  \integr\norma{\overline{f}_\eps(s)}{L^1(\Omega)}\de s
  \qquad \forall t \in \clint{0,T},
 \notag\label{e-stima1-Vis01-det}
\end{align}
which in particular also implies \eqref{estla-th-chi-supL1}.
Now for any $h \in \opint{0,T}$ and for any $v:\clint{0,T}\longrightarrow V'$ we define 
$\delta_h v : \opint{0,T-h} \longrightarrow V'$ by
\begin{equation}\label{e-deltah}
   \delta_h v (t)=v(t+h)-v(t), \qquad t \in \opint{0,T-h}.
\end{equation}
By \eqref{eneqmu-over-det} we get
\begin{equation}\label{e-3.16-Vis01}
  (\delta_h \thetabarep)'(t) + (\delta_h\chiep)'(t) - \Delta(\delta_h\thetabarep(t))=\delta_h\overline f_\eps(t) \qquad \text{a.e. in $\Omega$}
\end{equation}
for a.e. $t \in \opint{0,T-h}$. We fix $t \in \opint{0,T-h}$ such that \eqref{e-3.16-Vis01} and  \eqref{phreleps} hold, and such that
$(\partial|\delta_h\thetabarep|/\partial t)(x,t)$ and $(\partial|\delta_h\chiep|/\partial t)$$(x,t)$ exist for a.e. $x \in \Omega$: the set of such $t$'s has full measure in $\opint{0,T}$ by virtue of \cite[Theorem 2.1.11, p. 48]{Zie89}.
We test \eqref{e-3.16-Vis01} by $\sign_\mu(\delta_h\thetabarep) \in V$ and, arguing exactly as in \eqref{e-stima1-1-Vis01}--\eqref{e-stima1-2-Vis01} we get
\begin{equation}\label{e-3.17-Vis01}
  \intom \left((\delta_h\thetabarep)'(t)+(\delta_h\chiep)'(t)\right)\sign_0(\delta_h\thetabarep(t))
   \leq \intom |\delta_h \overline f_\eps(t)| \qquad \text{for a.e. $t \in \opint{0,T-h}$}.
\end{equation}
If we fix $x \in \Omega$ such that $(\partial\delta_h\chiep/\partial t)(x,t)$ and $(\partial|\delta_h\chiep|/\partial t)(x,t)$ exist, 
by the phase relaxation \eqref{phreleps} we have that
\begin{align}
   & (\delta_h\chiep)'(x,t)\sign_0(\delta_h\thetabarep(x,t)) \notag \\
   & = \frac{1}{\eps}(\delta_h\gamma (\thetabarep(x,t)) -\delta_h\xiep(x,t))\sign_0(\delta_h\thetabarep(x,t)), \notag 
\label{e-3.18-Vis01}
\end{align}
and the same monotonicity argument used in \eqref{monarg1}-\eqref{monarg3} yields
\begin{equation}
(\delta_h\chiep)'(x,t)\sign_0(\delta_h\thetabarep(x,t)) \ge 
\frac{\partial}{\partial t}|\delta_h\chiep|(x,t).
\end{equation}
Thus from \eqref{e-3.17-Vis01} we get
\begin{equation*}
\frac{\de}{\de t}  \intom (|\delta_h\thetabarep(t)|+ |\delta_h\chiep(t)|)
   \leq \intom |\delta_h \overline f_\eps(t)|, \qquad \text{for a.e. $t \in \opint{0,T-h}$},
\end{equation*}
hence integrating in time and exploiting 
\eqref{st1bar} we obtain
\begin{align}
   &\norma{\delta_h\thetabarep(t)}{L^1(\Omega)}+ \norma{\delta_h\chiep(t)}{L^1(\Omega)}\notag\\
   &\leq \norma{\delta_h\thetabarep(0)}{L^1(\Omega)}+\norma{\delta_h\chiep(0)}{L^1(\Omega)} + \integr\norma{\delta_h\overline{ f}_\eps(s)}{L^1(\Omega)} \de s \notag \\
   & \le h(\norma{\Delta\overline{\theta}_{0\eps}}{L^1(\Omega)} + 2|\Omega|) +
   \int_0^h\norma{\overline{f}_\eps(s)}{L^1(\Omega)}\de s + \integr\norma{\delta_h\overline{ f}_\eps(s)}{L^1(\Omega)}\de s.\label{est2aux} 
 \end{align}
Since $
   |\norma{\delta_h\thetaep(t)}{L^1(\Omega)}-\norma{\delta_h u_\eps(t)}{L^1(\Omega)}| \le \norma{\delta_h\thetabarep(t)}{L^1(\Omega)}$,
from \eqref{est2aux} we get
 \begin{align}
   &\norma{\delta_h\thetaep(t)}{L^1(\Omega)}+\norma{\delta_h\chiep(t)}{L^1(\Omega)} \notag\\
& \le \norma{\delta_h u_\eps(t)}{L^1(\Omega)} + h(\norma{\Delta\overline{\theta}_{0\eps}}{L^1(\Omega)} + 2|\Omega|) \notag \\
 & \phantom{\le\ }+
   \int_0^h\norma{\overline{f}_\eps(s)}{L^1(\Omega)}\de s + \integr\norma{\delta_h\overline{ f}_\eps(s)}{L^1(\Omega)}\de s\notag\label{e-3.19-Vis01}
\end{align}
for every $t \in \opint{0,T-h}$. Hence taking the limit as $h \searrow 0$, using \eqref{fBV},\eqref{DDuBV} and \cite[Lemma A.1]{Bre73}, and by comparison in \eqref{eneqep}, we infer \eqref{e-stima2-Vis01-det}.
\end{proof}

\begin{Lem}\label{stima3}
Let the assumptions of Lemma \ref{est1} be satisfied.
For every $j \in \en$ set 
\begin{equation}\label{omegaj}
  \Omega_j := \{x \in \Omega\ :\ d(x,\erre^d \setmeno \Omega) > 1/j\}. 
\end{equation}
For $v : Q \function \erre$ and $k \in \erre^d$ define 
$\tau_k v : \Omega_j \times \opint{0,T} \function \erre$ by
\begin{equation}\label{deltah}
  \tau_k v(x,t) := v(x+k,t)-v(x,t), \qquad x \in  \Omega_j,\ |k| < 1/j.
\end{equation}
If $(\thetaep,\chiep)$ is such that 
\eqref{thep reg}-\eqref{chi-i.c.ep} hold,
then
\begin{align}
  \|\tau_k\chiep\|_{L^\infty(0,T;L^1(\Omega_j))} 
  & \le 
   \norma{\tau_k\thetazep}{L^1(\Omega_j)} + \norma{\tau_k\chizep}{L^1(\Omega_j)}  + \norma{\tau_k u_\eps}{L^\infty(0,T;L^1(\Omega_j))} 
     \notag\\
   &\phantom{\le} \ +  
   \left\|\tau_k\big(\fep +u_\eps'-\Delta u_\eps\big)\right\|_{L^1(\Omega_j\times\opint{0,T})}. \label{stimrappincr}
\end{align} 
\end{Lem}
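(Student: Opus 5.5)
The plan is to repeat the $L^1$-contraction argument of Proposition \ref{P:uniqeps} and of the second half of Lemma \ref{est1}. The difference of two solutions is replaced by the spatial increment $\tau_k$, and the computation is carried out on the subdomain $\Omega_j$. With the notation \eqref{tetalabar}--\eqref{fbar} of Lemma \ref{est1}, Lemma \ref{L:regolarita' eq. calore} gives that $\thetabarep$ and $\chiep$ satisfy \eqref{eneqmu-over-det} a.e. in $Q$. For $|k|<1/j$ the translated functions $\thetabarep(\cdot+k,\cdot)$ and $\chiep(\cdot+k,\cdot)$ are defined on $\Omega_j\times\opint{0,T}$ and satisfy the same equations there, with $\overline f_\eps$ translated as well. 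Since $\tau_k\gamma(\thetabarep)$ is the increment of $\gamma(\thetaep+u_\eps)$ in \eqref{phreleps}, subtracting gives, a.e. in $\Omega_j\times\opint{0,T}$,
\begin{equation*}
(\tau_k\thetabarep)'+(\tau_k\chiep)'-\Delta\tau_k\thetabarep=\tau_k\overline f_\eps,\qquad
\eps(\tau_k\chiep)'+\tau_k\xiep=\tau_k\gamma(\thetabarep).
\end{equation*}

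\emph{Pointwise monotonicity step.} Fix $t$ as in the proof of Proposition \ref{P:uniqeps}, using \cite[Theorem 2.1.11]{Zie89}. Then repeat \eqref{monarg1}--\eqref{monarg3} verbatim with the two solutions $(\theta_1,\chi_1)$ and $(\theta_2,\chi_2)$ replaced by the values at $x+k$ and at $x$. The key facts are that $\tau_k\gamma(\thetabarep)$ has the sign of $\tau_k\thetabarep$ and $\tau_k\xiep$ has the sign of $\tau_k\chiep$, by the monotonicity of $\gamma$ and $\beta$. This yields
\[
(\tau_k\chiep)'\,\sign_0(\tau_k\thetabarep)\ge \frac{\partial|\tau_k\chiep|}{\partial t}\quad\text{a.e. in }\Omega_j\times\opint{0,T}.
\]

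\emph{Diffusion step.} Test the first equation by $\sign_\mu(\tau_k\thetabarep)$ on $\Omega_j$ and let $\mu\searrow0$. The aim is
\[
\frac{\de}{\de t}\int_{\Omega_j}\big(|\tau_k\thetabarep(t)|+|\tau_k\chiep(t)|\big)\le\int_{\Omega_j}|\tau_k\overline f_\eps(t)|,
\]
for which one needs $-\int_{\Omega_j}\Delta\tau_k\thetabarep\,\sign_0(\tau_k\thetabarep)\ge0$. This is where I expect the main obstacle. On $\Omega_j$ the function $\tau_k\thetabarep$ has no boundary condition, so integrating by parts produces the term $-\int_{\partial\Omega_j}\partial_{\vett n}|\tau_k\thetabarep|$, whose sign is not controlled.

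The first thing I would try is Kato's inequality $\Delta|w|\ge\sign_0(w)\Delta w$ in $\mathscr D'$. I would apply it with a sequence of cut-offs $\zeta_m\in C^\infty_c(\Omega)$, $0\le\zeta_m\le1$, with $\zeta_m\uparrow\mathds 1_{\Omega_j}$, and then check whether the boundary flux can be absorbed. Note that $\Delta\tau_k\thetabarep$ belongs to $L^\infty(0,T;L^1(\Omega_j))$ by Lemma \ref{est1}, so the terms involving $\zeta_m$ make sense. If the flux cannot be absorbed, I would accept an extra term controlled by $\norma{A\thetaep}{L^\infty(0,T;L^1(\Omega))}$ from Lemma \ref{est1}, or shrink the domain from $\Omega_j$ to $\Omega_{2j}$, since either variant still suffices for a Fr\'echet--Kolmogorov compactness argument.

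\emph{Conclusion.} Integrating the differential inequality on $\opint{0,t}$ gives $|\tau_k\thetabarep(0)|\le|\tau_k\thetazep|+|\tau_k u_\eps(0)|$ for the initial datum of $\thetabarep$, while $\chiep(0)=\chizep$. Dropping the nonnegative term $\int_{\Omega_j}|\tau_k\thetabarep(t)|$ on the left then bounds $\norma{\tau_k\chiep(t)}{L^1(\Omega_j)}$. With $\overline f_\eps=\fep+u_\eps'-\Delta u_\eps$, this is exactly the right-hand side of \eqref{stimrappincr}, where $\norma{\tau_k u_\eps(0)}{L^1(\Omega_j)}\le\norma{\tau_k u_\eps}{L^\infty(0,T;L^1(\Omega_j))}$ by the continuity of $u_\eps$. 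Taking the supremum over $t$ concludes the proof.
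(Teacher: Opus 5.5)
The overall structure matches the paper's proof: you translate \eqref{eneqmu-over-det} and \eqref{phreleps}, reuse \eqref{monarg1}--\eqref{monarg3} pointwise, and integrate in time. Your monotonicity step and final bookkeeping are correct.

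\textbf{The gap.} It is the one you flag yourself: you never establish $-\int_{\Omega_j}\Delta(\tau_k\thetabarep)\,\sign_0(\tau_k\thetabarep)\ge0$. In fact it cannot be established, because \eqref{stimrappincr} is false as stated whenever $\Gamma_0$ has positive surface measure. Take $\beta=\gamma=\mathrm{id}$, $u_\eps=0$, $\thetazep=\chizep=0$ and $f_\eps\equiv1$. All hypotheses of Lemma \ref{est1} hold, including \eqref{datieps piccoli}, and the right-hand side of \eqref{stimrappincr} vanishes for every $j$ and every $|k|<1/j$. The lemma would then force $\nabla\chiep(t)=0$ in $\mathscr{D}'(\Omega)$, so $\chiep(t)=c(t)$ by connectedness. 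The relaxation law then makes $\thetaep(t)=\eps c'(t)+c(t)$ spatially constant with zero trace on $\Gamma_0$, hence $\thetaep=0$. This gives $c\equiv0$, and \eqref{eneqep} collapses to $0=1$ in $V'$. The flux through $\partial\Omega_j$ is exactly what carries the Dirichlet condition into $\Omega_j$.

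The paper's proof has the same hole. It claims $\sign_\mu(\tau_k\thetabarep(t))\in V$ and argues ``exactly as in'' \eqref{e-stima1-1-Vis01}. But $\tau_k\thetabarep(t)$ is defined only on $\Omega_j$ and satisfies no boundary condition on $\partial\Omega_j$, so the term you worried about is silently dropped.

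\textbf{Your fallbacks.} The first does not help. Kato's inequality with $\zeta_m\uparrow\mathds{1}_{\Omega_j}$ produces the extra term $\int_0^t\int_{\Omega_j}|\Delta\zeta_m|\,|\tau_k\thetabarep|$. This is the boundary flux in disguise and has no reason to stay bounded as $m\to\infty$.

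The second fallback is the right repair, but it proves a different lemma. Fix $\zeta\in C^\infty_c(\Omega_j)$ with $0\le\zeta\le1$ and $\zeta=1$ on some $\omega\subset\subset\Omega_j$. (Note that $\Omega_{2j}\supseteq\Omega_j$, so passing to $\Omega_{2j}$ enlarges the domain rather than shrinking it.) The same computation then gives
\[
\int_\omega|\tau_k\chiep(t)|\le\int_{\Omega_j}\big(|\tau_k\thetazep|+|\tau_k u_\eps(0)|+|\tau_k\chizep|\big)+\int_0^t\!\!\int_{\Omega_j}\big(|\tau_k\overline f_\eps|+|\Delta\zeta|\,|\tau_k\thetabarep|\big).
\]
The $\thetaep$-part of the last term is $O(|k|)$ uniformly in $\eps$, provided the right-hand sides of \eqref{estla-th-chi-supL1} and \eqref{e-stima2-Vis01-det} are bounded in $\eps$. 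This follows from interior $W^{1,1}$-estimates for functions with integrable Laplacian. The $u_\eps$-part is handled through the convergence of $u_\eps$, like the other data. This suffices for the Fr\'echet--Kolmogorov step in Section \ref{S:limit}, but it is not \eqref{stimrappincr}. As written, your proposal leaves it as an unexecuted option rather than a proof.
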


\begin{proof}
Observe that $(\Omega_j)$ is an increasing sequence of open subsets invading $\Omega$. If we define by 
$\thetabarep$, $\overline{\theta}_{0\eps}$ and $\overline{f}_\eps$ by
\eqref{tetalabar}, \eqref{fbar}, then from Lemma \ref{L:regolarita' eq. calore} we get 
\begin{equation}\label{e-3.23-Vis01-det}
  (\tau_k\thetabarep)'(t) + (\tau_k\chiep)'(t)  - \Delta(\tau_k\thetabarep(t)) = \tau_k \overline{f}_\eps(t) \quad 
  \text{a.e. in }\Omega_j,\ |k| < 1/j
\end{equation}
for a.e. $t\in \opint{0,T}$.
We fix $t \in \opint{0,T}$ such that \eqref{e-3.23-Vis01-det} and \eqref{phreleps} hold, and such that
$(\partial|\tau_k\thetabarep|/\partial t)(x,t)$ and $(\partial|\tau_k\chiep|/\partial t)$$(x,t)$ exist for a.e. $x \in \Omega$: the set of such $t$'s has full measure in $\opint{0,T}$ by virtue of \cite[Theorem 2.1.11, p. 48]{Zie89}.
We multiply \eqref{e-3.23-Vis01-det} by $\sign_\mu(\tau_k\thetabarep)(t) \in V$, and arguing exactly as in \eqref{e-stima1-1-Vis01}--\eqref{e-stima1-2-Vis01} we get
  \begin{equation}
  \int_{\Omega_j}((\tau_k\thetabarep)'(t) +(\tau_k\chiep)'(t))\sign_{0}(\tau_k \thetabarep(t))
   \le \int_{\Omega_j}  |\tau_k \overline{f}_\eps(t)| \qquad \text{for a.e. $t \in \opint{0,T}$}. \notag
\end{equation}
If we fix $x \in \Omega$ such that $(\partial\tau_k\chiep/\partial t)(x,t)$ and $(\partial|\tau_k\chiep|/\partial t)(x,t)$ exist, 
by the phase relaxation \eqref{phreleps} we have that
\begin{align}
   & (\tau_k\chiep)'(x,t)\sign_0(\tau_k\thetabarep(x,t)) \notag \\
   & = \frac{1}{\eps}(\tau_k\gamma(\thetabarep)(x,t) -\tau_k\xiep(x,t))\sign_0(\tau_k\thetabarep(x,t)), \notag 
\label{e-3.18-Vis01}
\end{align}
where $\xiep \in L^2(Q)$ is such that \eqref{phreleps}-\eqref{phreleps2} hold.
Thus the monotonicity argument used in \eqref{monarg1}-\eqref{monarg3} yields
\begin{equation}
(\tau_k\chiep)'(x,t)\sign_0(\tau_k\thetabarep(x,t)) \ge 
\frac{\partial}{\partial t}|\tau_k\chiep|(x,t).
\end{equation}
Therefore
\begin{equation*}
\frac{\de}{\de t} \int_{\Omega_j} (|\tau_k\thetabarep(t)|+ |\tau_k\chiep(t)|)
   \leq \int_{\Omega_j} |\tau_k \overline{f}_\eps(t)|, \qquad \text{for a.e. $t \in \opint{0,T}$},
\end{equation*}
and integrating in time we obtain
\begin{align}
& \norma{\tau_k\thetabarep(t)}{L^1(\Omega_j)} + \norma{\tau_k\chiep(t)}{L^1(\Omega_j)}\notag \\
   &\leq \norma{\tau_k\thetabarep(0)}{L^1(\Omega_j)}+ 
   \norma{\tau_k\chiep(0)}{L^1(\Omega_j)} + \integr\norma{\tau_k\overline{f}_\eps(s)}{L^1(\Omega_j)}\de s.\label{est2-aux} 
\end{align}
 Since 
$|\tau_k\thetaep(t)| \le |\tau_k\thetabarep(t)| + |\tau_k u_\eps(t)|$,
from \eqref{est2-aux} we infer that
\begin{align}
   & \norma{\tau_k\thetaep(t)}{L^1(\Omega_j)} + \norma{\tau_k\chiep(t)}{L^1(\Omega_j)}\notag \\
   & \le 
   \norma{\tau_k\thetazep}{L^1(\Omega_j)} + \norma{\tau_k\chizep}{L^1(\Omega_j)} +\norma{\tau_k u_\eps(0)}{L^1(\Omega_j)}  + \norma{\tau_k u_\eps(t)}{L^1(\Omega_j)} 
    \notag\\
   &\phantom{\le} \ + \int_0^T\norma{\tau_k \overline{f}_\eps(s)}{L^1(\Omega_j)}  \de s\qquad \forall t \in (0,T],\label{e-3.25-Vis01-det}
\end{align}
which yields \eqref{stimrappincr}, due to the definition of $ \overline\fep$ (see \eqref{fbar}).
\end{proof}


\section{Asymptotic limit to the Stefan problem}
\label{S:limit}

Before proceeding with the limit procedure, we need the following uniqueness result for a suitable weak formulation of the Stefan problem. 

\begin{Lem}\label{uniqintStef}
Let \emph{(A1)} and \emph{(A4)} of Assumptions \ref{H} be satisfied. If $f \in L^2(0,T;V') + L^1(0,T;H)$, $\chi_0$, $\theta_0\in L^2(\Omega)$, 
then there exists at most one pair $(\theta,\chi) : Q \function \erre^2$ satisfying the following conditions:
\begin{alignat}{3}
  &\theta \in L^2(0,T;H), \label{P pb 0}\\
  & \thetahat\in L^\infty(0,T;V)\cap H^1(0,T;V'), \label{P pb 1} \\
  & \chi\in L^\infty(Q), \\
  & \theta(t) + \chi(t) + A\thetahat(t) = \fhat(t) + \theta_0 + \chi_0 & \qquad & 
      \text{in $V'$, for a.e. $t \in \opint{0,T}$,}
   \label{P pb 2} \\ 
  & \chi(x,t) \in \alpha\big(\theta(x,t) + u(x,t)\big) & \qquad & \text{for a.e. $(x,t)\in Q$}, \label{P pb 3}
\end{alignat}
where $\thetahat$ and $\fhat$ are defined according to \eqref{hat h-fine}.
A pair $(\theta,\chi)$ satisfying \eqref{P pb 1}-\eqref{P pb 3} is also called a 
\textsl{solution of the Stefan problem in the sense of Baiocchi-Duvaut-Fr\'emond}.
\end{Lem}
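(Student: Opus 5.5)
Take two solutions $(\theta_i,\chi_i)$, $i=1,2$, set $\theta:=\theta_1-\theta_2$, $\chi:=\chi_1-\chi_2$, and subtract the two copies of \eqref{P pb 2}. The data $\fhat$, $\theta_0$, $\chi_0$ cancel, so for a.e. $t\in\opint{0,T}$
\[
  \theta(t)+\chi(t)+A\thetahat(t)=0 \qquad \text{in } V',
\]
with $\thetahat(0)=0$ and $\thetahat\in L^\infty(0,T;V)\cap H^1(0,T;V')$. Since $\theta\in L^2(0,T;H)$, we have $\thetahat'=\theta$ in $L^2(0,T;H)$, so we can test this identity with $\theta(t)=\thetahat'(t)\in H$. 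The term $\fdual{A\thetahat(t)}{\thetahat'(t)}$ should equal $\frac12\frac{\de}{\de t}\intom|\nabla\thetahat(t)|^2$. This chain rule needs justification, because $\thetahat'$ lies only in $L^2(0,T;H)$ and not in $L^2(0,T;V)$. I would handle it by rewriting the equation as $A\thetahat=-(\theta+\chi)\in L^2(0,T;H)$ and then arguing exactly as in Lemma \ref{L:regolarita' eq. calore}: $\thetahat$ has values in the domain of the $H$-realization of $A$, which is the subdifferential of the convex functional $v\mapsto\frac12\intom|\nabla v|^2$ on $H$, and \cite[Lemma 3.3]{Bre73} then gives the chain rule. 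If this is delicate, a fallback is to regularize in time (Steklov averages) and pass to the limit.

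Integrating over $\opint{0,t}$ then gives
\[
  \int_0^t\intom \theta\,\theta \;+\; \int_0^t\intom \chi\,\theta \;+\; \frac12\intom|\nabla\thetahat(t)|^2=0 .
\]
The middle term has a sign: $\chi\theta=(\chi_1-\chi_2)\big((\theta_1+u)-(\theta_2+u)\big)\ge0$ a.e. in $Q$, because $\chi_i\in\alpha(\theta_i+u)$ by \eqref{P pb 3} and $\alpha$ is monotone by (A4). The shift by $u$ is common to both solutions, so it does not affect this. All three terms are therefore nonnegative, and we get $\norma{\theta}{L^2(0,t;H)}=0$, i.e. $\theta_1=\theta_2$ a.e. in $Q$.

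It remains to show $\chi=0$. Since $\theta=0$, we have $\thetahat=0$, so $A\thetahat=0$, and the difference equation reduces to $\chi(t)=0$ in $V'$ for a.e. $t$. As $\chi(t)\in L^\infty(\Omega)\subset H$, this gives $\chi_1=\chi_2$ a.e. in $Q$. Note that the argument uses only monotonicity of $\alpha$; boundedness of $\alpha$ enters only to ensure $\chi\in L^\infty(Q)$, so that $\chi\theta$ is integrable. The main obstacle is the rigorous justification of the time chain rule in the first step; the rest is direct.
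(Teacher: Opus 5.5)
Your proposal is correct and follows the paper's proof essentially step for step. You subtract the two integrated equations, test with $\theta=\thetahat'$, use the monotonicity of $\alpha$ to get $\int_0^t\!\intom\chi\theta\ge0$, conclude $\theta=0$, and then obtain $\chi=0$ by comparison. The only addition is your justification of $(A\thetahat,\thetahat')_H=\frac12\frac{d}{dt}\intom|\nabla\thetahat|^2$ via \cite[Lemma 3.3]{Bre73}; this is valid because $A\thetahat=-(\theta+\chi)\in L^2(0,T;H)$ and $\thetahat\in H^1(0,T;H)$, and the paper uses this step without comment.
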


\begin{proof}
Let 
$(\theta_i,\chi_i),\ i=1,2,$ be two solutions of \eqref{P pb 1}--\eqref{P pb 3}, and set 
\begin{equation}
  \thetadiff:=\theta_1-\theta_2, \qquad \chidiff:=\chi_1-\chi_2.
\end{equation} 
Taking the difference of the equations \eqref{P pb 2}  written for 
$(\theta_1,\chi_1)$ and $(\theta_2,\chi_2),$ we find
\begin{eqnarray}
&\thetadiffhat\in L^{\infty}(0,T;V)\cap H^1(0,T;V'),\quad  \\  
 & \chidiff\in L^\infty(Q),\\
&\thetadiff+\chidiff+A\thetadiffhat=0\quad 
  \textrm{in}\ V',\ \ {\rm in}\ \opint{0,T}.\label{int-eq in H}
\end{eqnarray}
By a comparison in the last equation, we see that 
$A\thetadiffhat\in L^1(0,T;H),$ therefore 
multiplying \eqref{int-eq in H} by $\thetadiff$, and integrating over 
$(0,t)\times\Omega$,
we get 
\begin{equation}
\normaq{\thetadiff}{L^2(0,t;H)}+
\integr\intom\chidiff(x,s)\thetadiff(x,s) \de x \de s +  
\frac{1}{2}\intom|\nabla\thetadiffhat(x,t)|^2 \de x=0.  \label{last eq}
\end{equation}
Therefore, since $\chidiff\thetadiff\geq0$ a.e. in $Q$ by \eqref{P pb 3} and by the monotonicity of $\alpha$, from \eqref{last eq} we infer that $\thetadiff=0$ a.e. in $Q$, and therefore $A\thetadiffhat=0$ a.e. in $Q$. By a comparison in 
\eqref{int-eq in H}, also $\chidiff=0$ a.e. in $Q$, and the uniqueness of problem \eqref{P pb 1}-\eqref{P pb 3} is proved.
\end{proof}

We are now ready to prove our main convergence theorem.

\begin{proof}[Proof of Theorem \ref{mainthm}]
We can assume that $\eps \le 1$.
Integrating in time the energy balance equation \eqref{eneqep} we get 
\begin{equation}
  \thetaep + \chiep + A\thetahatep = \thetazep + \chizep + \widehat{f}_\eps
  \qquad \text{in $V'$, a.e. in $\opint{0,T}$,}\label{int eq eps}
\end{equation}
where $\thetahatep$ and $\widehat{f}_\eps$ are defined  defined according to \eqref{hat h-fine}. Let us observe that thanks to (A4) of Assumptions \ref{H}, and to \eqref{ic->2}, we can apply
Lemma \ref{L2estlem}, therefore  we have that there exist $\theta \in L^2(0,T;H)$, $\chi \in L^\infty(Q)$ such that $\thetahat \in L^\infty(0,T;V)$ and 
\begin{alignat}{3}
  & \thetaep \convergedeb \theta & \quad & \text{in $L^2(0,T;H)$}, \\
  & \thetahatep \convergedebstar \thetahatep & \quad & \text{in $L^\infty(0,T;V) \cap H^1(0,T;H)$}, \\
  & \chiep \convergedebstar \chi & \quad & \text{in $L^\infty(Q)$},
\end{alignat}
therefore we can take the limit in \eqref{int eq eps} and obtain
\begin{equation}
  \theta + \chi + A\thetahat = \theta_0 + \chi_0 + \widehat{f}
  \qquad \text{in $V'$, a.e. in $\opint{0,T}$.}\label{int eq}
\end{equation}
Let $\xiep \in L^2(Q)$ be such that
\begin{align}
& \xiep(x,t) \in \beta(\chiep(x,t)) & \quad &
\text{for a.e. $(x,t) \in Q$}, \label{lim1}\\
& \eps\chiep'(x,t) + \xiep(x,t) = \gamma(\thetaep(x,t) + u_\eps(x,t)) & \quad &
\text{for a.e. $(x,t) \in Q$}.\label{lim2}
\end{align}
From \eqref{estla-th-chi-supL1} of Lemma \ref{est1} we deduce that 
\[
  \norma{\thetaep}{L^1(Q)} + \norma{\chiep}{L^1(Q)} \le C,
\]
for a suitable constant $C > 0$ independent of $\eps$. Moreover, recalling notations \eqref{omegaj}, \eqref{deltah}, from Lemma \ref{stima3} we infer that for every $j$ and every $k$ we have
\begin{align}
  & \norma{\tau_k\thetaep}{L^1(\Omega_j\times\opint{0,T})} +\norma{\tau_k\chi_\eps}{L^1(\Omega_j\times\opint{0,T})} \notag \\
& = \norma{\tau_k\thetaep}{L^1(\opint{0,T};L^1(\Omega_j))} + \norma{\tau_k\chi_\eps}{L^1(\opint{0,T};L^1(\Omega_j))} \notag \\
& \le
   T(\norma{(\tau_k\thetaep)}{L^\infty(0,T;L^1(\Omega_j))} +
   \norma{(\tau_k\chi_\eps)}{L^\infty(0,T;L^1(\Omega_j))} )\notag \\
   &\le  T(\norma{\tau_k\thetazep}{L^1(\Omega_j)} + 
           \norma{\tau_k\chizep}{L^1(\Omega_j)}  +
           \norma{\tau_k u_\eps}{L^\infty(0,T;L^1(\Omega_j))}) \notag\\ 
    &\phantom{\le\ } + T\norma{\tau_k(\fep+ u_\eps'-\Delta u_\eps)}{L^1(\Omega_j\times\opint{0,T})},
   \end{align}
therefore from \eqref{f->}-\eqref{ic->} and from a suitable versione of the Riesz-Frechet-Kolmogorov theorem (cf. \cite[Theorem 2.22]{Ada75}) we infer that, at least for a subsequence which we do not relabel, the following strong convergences hold:
\begin{alignat}{3}
  & \thetaep \to \theta & \quad & \text{in $L^1(Q)$}, \\
  & \chiep \to \chi & \quad & \text{in $L^1(Q)$}.
\end{alignat}
Estimate \eqref{e-stima2-Vis01-det} yields the strong convergence
\begin{equation}
\eps\chiep' \to 0 \quad \text{in $L^1(Q)$}.
\end{equation}
Therefore, taking possibly a further subsequence, we find a subset $\tilde{Q}$ of full measure in $Q$
such that 
\begin{alignat}{3}
   & \thetaep(x,t) \to \theta(x,t), \quad
   u_\eps(x,t) \to u(x,t) & \quad & \text{in $\erre$, $\forall (x,t) \in \tilde{Q}$} \\
   &  \chiep(x,t) \to \chi(x,t), \quad
    & \quad & \text{in $\erre$, $\forall (x,t) \in \tilde{Q}$}, \\
 & 
   \eps\chiep'(x,t) \to 0  & \quad & \text{in $\erre$, $\forall (x,t) \in \tilde{Q}$}. \label{lim7}
\end{alignat}
Hence by the continuity of $\gamma$ and by a comparison in \eqref{lim2} we have that for every $(x,t) \in \tilde{Q}$ there exists $\xi(x,t) \in \erre$ such that
\begin{alignat}{3}
   & \gamma(\thetaep(x,t)+u_\eps(x,t)) \to \gamma(\theta(x,t)+u(x,t))& \quad & \text{in $\erre$, $\forall (x,t) \in \tilde{Q}$} \\
 &  \xiep(x,t) \to \xi(x,t), \quad
   \eps\chiep'(x,t) \to 0  & \quad & \text{in $\erre$, $\forall (x,t) \in \tilde{Q}$}.
\end{alignat}
Therefore, taking the limit as $\eps \searrow  0$ in \eqref{lim2} we obtain
\[
\xi(x,t) = \gamma(\theta(x,t) + u(x,t)).
\]
Let us also observe that from \eqref{lim1}, \eqref{lim7} we obtain (see, e.g., \cite[ Proposition 2.5, p. 27]{Bre73} applied for monotone graphs in $\erre$) that
\[
 \xi(x,t) \in \beta(\chi(x,t)) \quad  \forall (x,t) \in \tilde{Q},
\]
so that 
\[
  \gamma(\theta(x,t) + u(x,t)) \in \beta(\chi(x,t)),
\]
and from the compatibility condition (A5) in Assumptions \ref{H} we get $\chi(x,t) \in \alpha(\theta(x,t) + u(x,t))$.
Hence we have proved that $(\theta,\chi)$ solves \eqref{P pb 0}-\eqref{P pb 3}, which has at most one solution by virtue of Lemma \ref{uniqintStef}. This uniqueness property, together with the fact that the formulation \eqref{wStef th}-\eqref{wStef pb 3} is stronger than the formulation of problem \eqref{P pb 1}-\eqref{P pb 3}, let us infer that $(\theta,\chi)$ is indeed the solution of 
\eqref{wStef th}-\eqref{wStef pb 3}.
By this uniqueness property, we also have that the whole sequence
$(\thetaep,\chiep)$ converges to $(\theta,\chi)$.

\end{proof}


\end{document}